\newtheorem{theorem}{Theorem}[section]
\newtheorem{lemma}[theorem]{Lemma}
\newtheorem{conjecture}[theorem]{Conjecture}
\newtheorem{openproblem}[theorem]{Open Problem}
\theoremstyle{definition}
\newcounter{example}
\newcounter{open}
\newcounter{figno}
\newcounter{tableno}
\numberwithin{equation}{section}
\numberwithin{figure}{section}
\numberwithin{table}{section}
\numberwithin{example}{section}
\newcommand{\prob}[1]{\mathbb{P}\left(#1\right)}
\newcommand{\Go}{G_{\text{\rm open}}}
\newcommand{\Eo}{E_{\text{\rm open}}}
\newcommand{\une}{\leftrightarrow}
\newcommand{\ore}{\rightarrow}
\newcommand{\er}{\rightarrow}
\newcommand{\el}{\leftarrow}
\newcommand{\pr}{p_{\text{\rm right}}}
\newcommand{\pl}{p_{\text{\rm left}}}
\newcommand{\po}{p_{\text{\rm open}}}
\newcommand{\ER}{Erd\H{o}s--R\'{e}nyi }
\newcommand{\lto}{\leftarrow}
\newcommand{\lrto}{\leftrightarrow}
\newcommand{\lr}{\leftrightarrow}
\newcommand{\cA}{{\mathcal A}}
\newcommand{\cO}{\mathcal O}
\newcommand{\cE}{\mathcal E}
\newcommand{\cI}{\mathcal I}
\newcommand{\E}{{\mathbb E}}
\author[J. Gravner]{Janko Gravner}
\address{University of California, Davis, Department of Mathematics}
\email{gravner@math.ucdavis.edu}
\author[B. Kolesnik]{Brett Kolesnik}
\address{University of Oxford, Department of Statistics}
\email{brett.kolesnik@stats.ox.ac.uk}
\keywords{bootstrap percolation; Catalan percolation; jigsaw percolation; 
phase transition; random graph; transitive closure; triadic closure}
\subjclass[2010]{60K35; 05C80}
\begin{document}

\title[Transitive closure in a polluted environment]
{Transitive closure in a polluted environment}

\begin{abstract}
We introduce and study a new percolation model, inspired
by recent works on jigsaw percolation, graph bootstrap percolation, 
and percolation in polluted environments. 
Start with an oriented graph $G_0$ of initially occupied edges 
on $n$ vertices, and
iteratively occupy additional (oriented) edges by transitivity, with the 
constraint that only open edges in a certain random set 
can ever be occupied. 
All other edges are closed, creating a set of obstacles for the 
spread of occupied edges. 
When $G_0$ is an unoriented linear graph, 
and leftward and rightward edges are open independently 
with possibly different probabilities, we identify 
three regimes in which the 
set of eventually occupied edges is 
either all open edges, the majority of open 
edges in one direction, or only a very small proportion of 
all open edges. In the more general setting where 
$G_0$ is a connected unoriented graph of bounded 
degree, 
we show that the transition between 
sparse and full occupation of open 
edges occurs when the probability  
of open edges
is $(\log n)^{-1/2+o(1)}$. 
We conclude with several conjectures and  
open problems. 
\end{abstract}

\maketitle

\section{Introduction}\label{sec-intro}

Suppose that we have $n$ logical statements, each represented 
by a vertex of a graph $V$, and that they are all equivalent, 
but we are not aware of this fact. 
The initial information
consists of some implications, and is realized as
an oriented subgraph $G_0=(V,E_0)$. We then 
try to logically complete the knowledge by transitivity. 
However, a capricious ``censor'' allows only 
certain conclusions to be made, represented 
by  open edges. A natural question is whether 
a substantial proportion of uncensored  knowledge can 
be obtained by this transitive closure process. 

Another application is as follows. Suppose we want to compute the product $a_1a_2\cdots a_{n-1}$ in a 
noncommutative group. However, some of the subproducts, and their inverses, are 
not allowed to be computed. Can the product still be computed? If all 
$a_i$ and $a_i^{-1}$ are initially known, then $G_0$ is the 
unoriented
linear graph 
$L_{n}$ on the points $[n]=\{1,2,\ldots,n\}$
with edges between nearest neighbors. Rightward edges in $G_0$ represent the $a_i$, 
leftward edges in $G_0$ represent their inverses $a_i^{-1}$,
and vertices in $G_0$ are positions for multiplication brackets. 
Longer edges between vertices in $[n]$ represent other 
elements in the group. 

We now introduce our dynamics more formally. All of our 
graphs will have a fixed vertex set $V$ of $n$ points. 
In many contexts, it is convenient to take 
$V=[n]$. 
We denote oriented and unoriented edges using the notations 
$i\ore j$ and $i\une j$. Throughout we identify unoriented edges 
with two edges in both directions.
As our focus is transitive closure, it is convenient to adopt the notation 
$i\ore j\ore k$ for the pair of oriented edges  $i\ore j$ and $j\ore k$. 
Likewise, we make use of similar 
abbreviations, such as $i\leftarrow j\to k$ 
and $i\to j\leftarrow  k$. 

We consider an evolving sequence $G_t=(V,E_t)$, 
$t=0,1,\ldots$ of graphs, with the set of {\it occupied edges\/}
$E_t\subset V\times V$ by time $t$ nondecreasing in time, 
that is, $E_t\subset E_{t+1}$. We denote 
the set of  eventually occupied edges
by 
$E_\infty=\bigcup_{t\ge 0} E_t$, and 
put $G_\infty=(V,E_\infty)$.
More specifically, our {\it transitive closure\/}
dynamics, once initialized by some $G_0=(V,E_0)$, are 
governed by another graph $\Go=(V,\Eo)$, where $\Eo\subset (V\times V)\setminus E_0$ are
{\it open\/} edges. 
Note, in particular, that 
the sets of initially occupied and open edges are disjoint, 
$\Eo\cap E_0=\emptyset$. 
The edges in $(V\times V)\setminus (\Eo\cup E_0)$ 
are called {\it closed\/}. 
The status of self-loops $i\une i$
will be  irrelevant, but for concreteness, we  
assume they are all closed. 
The dynamics evolve as follows: 
given the set of occupied 
edges $E_t$ at time $t$, we let 
\begin{equation}\label{E_Et}
E_{t+1}=E_t\cup \{i\to j\in \Eo:i\to k\to j\in E_t,\mbox{ for some }k\in V\}. 
\end{equation}
In words, 
an open edge $i\to j$ becomes occupied at time $t+1$
if there is a series of two occupied edges
$i\to k\to j$ at time $t$. 

If $G_0$ is strongly connected and all edges not initially occupied are open, then it is clear that $G_\infty$ is a  
complete graph. Thus it is natural to ask what happens when 
some --- most, in our case --- edges are closed and thus 
unable to ever become occupied. 
In this introduction, 
we will assume that $G_0$ is a deterministic 
connected unoriented graph. 
In general, when $G_0$ does not have extra structure, $\Go$ will be
the 
oriented Erd\H{o}s--R\'{e}nyi
graph with edge probability $\po>0$.
(To be more precise, 
this is 
a slightly modified 
version in which each oriented edge
{\it not} in $E_0$ is open with probability $\po>0$ and closed otherwise.)
We note here that the case when $\Go$ is 
unoriented is easier, and also results like
Theorem~\ref{tc2} below are not possible.

Some of our results are 
 concerned with the specific case 
when $G_0=L_n$ is the unoriented linear graph
with edges 
$1\lr 2\lr\cdots\lr n$, 
and it is in this case
that we may assign 
different probabilities $\pl>0$ and $\pr>0$ to leftward 
and rightward open edges. 
The probabilities $\po$, $\pl$ and $\pr$ may depend on $n$, however, we 
suppress this notationally whenever the dependence is clear in context.

We say that a 
subset $V'\subset V$ is \emph{saturated} at time $t$ if all 
open edges in $V'\times V'$ are occupied at this time. When we do 
not make a reference to time, we mean $t=\infty$, that is,
$V'$ is saturated eventually. For an edge $i\ore j$, 
we define its {\it length\/} as the number of edges on the shortest 
oriented
path in the 
graph $G_0$ from $i$ to $j$ (or $\infty$ if no such path exists). 
For instance, 
when $G_0=L_n$,
the length of $i\ore j$ is simply $|i-j|$. 

Our first result is
for general initial graphs of  bounded degree. 

Recall that a sequence of events $A_n$ hold 
{\it asymptotically almost surely,} abbreviated a.a.s., if 
their probabilities converge to 1.

\begin{theorem}
\label{tc1} Assume that $G_0=(V,E_0)$ is a 
connected unoriented graph on $V=[n]$ with vertex degrees 
bounded by a constant $D$, and 
that open 
(oriented) edges are chosen independently (from amongst 
those not in $E_0$) with probability $\po$.
Fix a constant $\alpha>0$. 
Then there exist constants $c\in(0,\infty)$ depending on 
$D$ and $\alpha$,  and $C\in(0,\infty)$ depending only on $D$, 
so that the following statements hold. 
\begin{itemize}
\item[{\rm (1)}] When $\po<c\frac 1{\sqrt{\log n}}$, a.a.s.\ $E_\infty$ contains no 
edge longer than $\alpha\log n$.
\item[{\rm (2)}] 
When $\po>C\frac{\log\log n}{\sqrt{\log n}}$, a.a.s.~saturation occurs,  $E_\infty=E_0\cup \Eo$. 
\end{itemize}
\end{theorem}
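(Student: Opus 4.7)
The basic tool for both parts is the notion of a \emph{witness tree} for an occupied edge $i \to j$: a rooted binary tree whose root is labeled $i \to j$, each internal node labeled $u \to v$ has two children labeled $u \to w$ and $w \to v$ for some $w$, each internal node's label lies in $\Eo \cup E_0$, and each leaf's label lies in $E_0$. Reading the leaves from left to right gives a walk in $G_0$ from $i$ to $j$; a tree with $m$ leaves has $m-1$ internal nodes, each of which (for a simple underlying walk) contributes an independent factor $\po$ to the probability of validity. Then $i \to j \in E_\infty$ iff a valid witness tree exists.

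\textbf{Part (1)} is a first moment bound. Any witness of an edge of length $\ge \alpha \log n$ has walk length $m \ge \alpha \log n$. The bounded-degree assumption gives at most $D^m$ walks of length $m$ from a fixed vertex, and the number of binary tree shapes on $m$ leaves is the Catalan number $C_{m-1} \le 4^m$. After arguing that one may restrict to witnesses with a \emph{simple} underlying walk (a repeated vertex in a witness walk can be shortcut to yield a strictly shorter valid witness), each such witness is valid with probability $\po^{m-1}$, and
\begin{equation*}
\E\!\left[\#\{i \to j \in E_\infty : \dist_{G_0}(i,j) \ge \alpha \log n\}\right] \le n \sum_{m \ge \alpha \log n} (4D)^m \po^{m-1} = O\!\left(n (4D\po)^{\alpha \log n}\right),
\end{equation*}
which is $o(1)$ as soon as $4D\po < e^{-1/\alpha}$; the hypothesis $\po < c/\sqrt{\log n}$ secures this once $n$ is large, for any fixed $c$.

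\textbf{Part (2)}: since $\po = o(1)$, a single witness tree is exponentially unlikely to be valid, so I would use a multi-scale bootstrap. \emph{Stage A (small scales):} pick a scale $L = L(n)$ and show by induction on dyadic scale $k = 1, 2, \dots, \log_2 L$ that every open edge of length $\le 2^k$ is a.a.s.\ occupied. The base $k=1$ is automatic (length-$2$ open edges have a free $E_0$-bisector on a shortest path). For the doubling step, an open length-$\ell$ edge $i \to j$ with $\ell \in (2^k, 2^{k+1}]$ is occupied whenever some vertex $m$ has both $i \to m$ and $m \to j$ open with lengths $\le 2^k$; Janson's inequality gives that the probability no such $m$ works is $\exp(-\Omega(\po^2 N_k))$, where $N_k$ is the number of candidate midpoints, which is $o(n^{-2})$ once $\po^2 N_k \gg \log n$. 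A union bound over endpoint pairs closes the step. \emph{Stage B (long scales):} once all open edges of length $\le L$ are occupied, any open edge of length $> L$ has enough candidate bisectors in the $L$-neighborhood of its endpoints to be occupied by one further Janson step.

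The \textbf{main obstacle} is calibrating Stage A: each doubling consumes a factor $\po^2$ in the midpoint-success probability, while only $O(\log\log n)$ doublings are available to push $L$ up to the required scale, and this trade-off is precisely what forces $\po$ to have magnitude $1/\sqrt{\log n}$. The additional $\log\log n$ factor in the upper threshold comes from the dependence between witnesses at different scales, which I would manage by sprinkling: split $\Eo$ into $\Theta(\log\log n)$ independent layers of density $\po/\Theta(\log\log n)$ and consume one layer per doubling step to restore independence, which is exactly what accounts for the gap between Parts (1) and (2).
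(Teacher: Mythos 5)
Your Part (1) rests on the claim that the underlying walk of a witness tree can always be simplified, so that the $m-1$ internal labels are distinct open edges and a valid witness occurs with probability exactly $\po^{m-1}$. That claim is false, and your resulting bound would in fact contradict Part (2) of the very theorem you are proving. Your computation gives a vanishing expectation for any \emph{constant} $\po$ with $4D\po$ below a fixed threshold, hence a.a.s.\ no occupied edges of length $\ge \alpha\log n$. But for $G_0=L_n$ (so $D=2$), Theorem~\ref{tc2}(3) and Theorem~\ref{tc1}(2) yield a.a.s.\ full saturation already at $\po \ge C\log\log n/\sqrt{\log n}$, far below any constant, and saturation forces $\Theta(n^2\po)$ open edges of length $\gg\log n$ to become occupied. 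The mechanism your argument misses is the back-and-forth interaction between leftward and rightward open edges: on $L_n$ the only simple oriented walk from $1$ to $n$ is $1,2,\dots,n$, whose witness trees are precisely the Catalan parenthesizations, and those a.a.s.\ all fail whenever $\po<1/4$ (cf.\ Lemma~\ref{catalan-sub}); long edges are instead certified by non-simple walks that revisit vertices and re-use open edges, so the validity probability is far larger than $\po^{m-1}$ and the "shortcut to a simple walk" is not available. The paper avoids walks entirely: it works with the minimal vertex set $I_e$ that occupies an edge (a set, not a sequence), deduces by the Aizenman--Lebowitz rescaling (Lemma~\ref{AL-property}) that some $|I_e|\in[\ell/2,\ell]$, shows every vertex of $I_e$ is part of a local \emph{horn} pattern (Lemma~\ref{Ie-lemma}), and extracts $\Omega(|I_e|/D^2)$ edge-disjoint horns (Lemma~\ref{L_BK}) to apply the BK inequality. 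The per-vertex horn probability is $\Theta(\po+|I_e|\po^2)$, and the second term becomes a small constant precisely when $\po\asymp(\log n)^{-1/2}$; this is where the true threshold arises, and it cannot be reproduced by a first-moment bound over simple-walk witness trees.

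Your Part (2) proposes a global dyadic induction "a.a.s.\ every open edge of length $\le 2^k$ is occupied," closed by Janson at each doubling. The induction already fails at $k=2$: on $L_n$ an open edge of length $4$ has exactly one candidate bisector $m$ at distance $\le 2$ from both endpoints, so $N_1=1$ and your Janson bound is $\exp(-\Theta(\po^2))=1-o(1)$ for $\po=o(1)$. A positive proportion of open length-$4$ edges have no open short bisector and become occupied only later, after occupation spreads in from elsewhere, which a scale-by-scale union bound cannot see. The paper's Section~\ref{sec-supercr} is a nucleation argument instead: it shows that a fixed subtree of $G_0$ of size $\asymp(\log n/\log\log n)^3$ is internally saturated with probability at least $(2\sqrt n)^{-1}$ (this is where the exponent $-1/2$ and the $\log\log n$ factor are calibrated), takes $\Theta(n/\mathrm{polylog}(n))$ essentially disjoint such subtrees to get a.a.s.\ one saturated nucleus, and then propagates saturation outward along $G_0$ by an inductive boundary-vertex step using horn and strong-connectivity properties (Steps 2--3). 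Sprinkling into $\Theta(\log\log n)$ independent layers is not what produces the $\log\log n$ factor here, and the multi-scale doubling scheme is not a correct framework for the supercritical regime.
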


We remark that the identical 
result (with easier proof) holds under the assumption 
that $\Go$ is the unoriented \ER graph with probability  $\po$ 
of open edges.

\begin{figure}
\centering
\includegraphics[trim=0cm 0cm 0cm 0cm, clip, width=.3\textwidth]{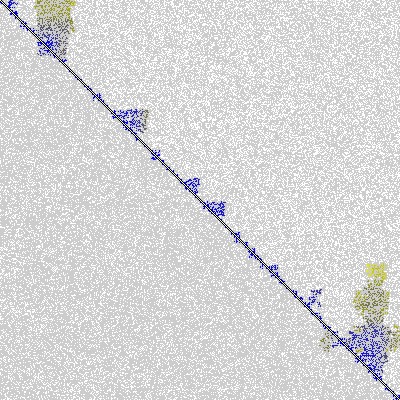}
\hskip0.1cm
\includegraphics[trim=0cm 0cm 0cm 0cm, clip, width=.3\textwidth]{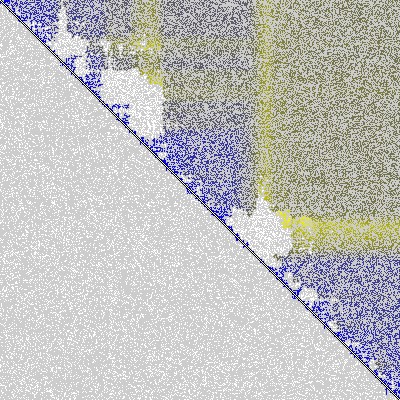}
\hskip0.1cm
\includegraphics[trim=0cm 0cm 0cm 0cm, clip, width=.3\textwidth]{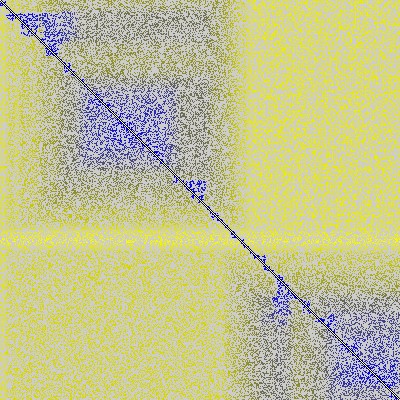}
\caption{Illustration of the three regimes in Theorem~\ref{tc2} when 
$n=300$: subcritical (left, with $\pl=0.24$, 
$\pr=0.36$), intermediate (middle, with $\pl=0.2$, 
$\pr=0.4$; note the 
non-monotone fashion in which edges are occupied), and 
supercritical (right, with $\pl=\pr=0.35$; note the 
nucleation). 
The dynamics are represented as the evolution of the adjacency matrix,
with edges exhibited as sites in the square. Initially occupied
sites next to the diagonal are black, closed sites are grey and
open sites are white. After the transitive closure process is complete,
the initially white sites that become occupied are colored according
to the time of occupation, from blue (the earliest) to yellow (the
latest).
}
\label{fig-tra}
\end{figure}

Our next theorem establishes three regimes in
the case of the unoriented linear graph. 

\begin{theorem}  \label{tc2}
Assume that $G_0=L_n$ is the unoriented linear 
graph on $V=[n]$
with edge set $E_0$ consisting of all edges $1\lr 2\lr\cdots\lr n$. 
Suppose that open leftward and rightward edges are chosen independently (from 
amongst those not in $E_0$) with probabilities $\pl$ and $\pr$. Fix a constant $\alpha>0$. 
Then there exist constants $c\in (0,\infty)$ 
and $A\in (0,1)$ depending on $\alpha$, and a constant $C\in(0,\infty)$, 
so that the following three statements hold.

\begin{itemize}
\item[{\rm (1)}] 
When $\max\{\pl,\pr\} < c\frac 1{\sqrt{\log n}}$,
a.a.s.\ $E_\infty$ contains no 
edge longer than $\alpha\log n$.
\item[{\rm (2)}] When $\pl<c\frac 1{\sqrt{\log n}}$ and 
$\pr>A$,  a.a.s.\ $E_\infty$ contains 
all open rightward edges 
longer than $\alpha\log n$, but no such  
leftward edge.
\item[{\rm (3)}] When $\min\{\pl,\pr\}> C\frac{\log\log n}{\sqrt{\log n}}$,
a.a.s.~saturation occurs,  $E_\infty=E_0\cup \Eo$. 
\end{itemize}
\end{theorem}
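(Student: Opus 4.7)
My plan is to handle the three parts of Theorem~\ref{tc2} separately, reducing (1) and (3) to Theorem~\ref{tc1} and giving a new argument for the intermediate regime (2). For (1), I would couple the mixed-density open set $\Eo$ with a uniform-density open set of density $p=\max\{\pl,\pr\}$, which stochastically dominates our process; the hypothesis of (1) places the coupled process in the subcritical regime of Theorem~\ref{tc1}(1), and the conclusion transfers. For (3), I would couple instead with a uniform-density set of density $p=\min\{\pl,\pr\}$, which is dominated by $\Eo$; Theorem~\ref{tc1}(2) gives saturation of this smaller open set, and a short additional step (for each $e\in\Eo\setminus\Eo'$, find an intermediate $k$ with $(i,k),(k,j)\in E_0\cup\Eo'$, which exists a.a.s.\ by a first-moment calculation since the product density satisfies $np^2\to\infty$) promotes this to full saturation of $\Eo$.

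Part (2) splits into showing (2a) every open rightward edge of length $>\alpha\log n$ is occupied, and (2b) no open leftward edge of length $>\alpha\log n$ becomes occupied. For (2a), I note that any split $i\to k\to j$ of a rightward edge that uses only rightward-oriented edges must have $i<k<j$, else one of the children is leftward. Hence the rightward subprocess is closed under itself and reduces to Catalan-type bootstrap on the oriented graph $L_n^\to$; a supercritical argument in the spirit of Section~\ref{sec-catalan} provides a threshold $A<1$ above which, a.a.s., every open rightward edge of length greater than $\alpha\log n$ becomes occupied.

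For (2b), I would first dominate by setting $\pr=1$, so that every rightward edge is occupied at time $t=0$ via transitivity through $E_0$. In the dominated dynamics, an open leftward edge $j\to i$ can become occupied only through a split $j\to k\to i$ in one of three regimes: (b) $i<k<j$, a merge of two shorter leftward intervals $[i,k]$ and $[k,j]$; or (a) $k<i$ or (c) $k>j$, which realizes $[i,j]$ as a sub-interval sharing one endpoint of a strictly longer occupied leftward interval. Since (a) and (c) only produce sub-intervals of already longer occupied leftward ones, any \emph{maximal} occupied leftward interval $[I^*,J^*]$ must have been created via case~(b), and any occupied leftward edge of length $\ge\alpha\log n$ is contained in such a case-(b)-created interval of length $\ge\alpha\log n$.

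Associated to a case-(b) creation of $[I^*,J^*]$ is a binary merge tree with $L=J^*-I^*$ length-one leaves in $E_0$ and $L-1$ distinct internal nodes, each an open leftward edge in $\Eo$; the number of tree shapes is at most the Catalan number $C_{L-1}\le 4^{L-1}$, and conditionally on the shape and root, the probability that every internal edge is open equals $\pl^{L-1}$. A union bound over the $O(n^2)$ choices of root, lengths $L\ge\alpha\log n$, and tree shapes yields failure probability at most $O(n^2)\,(4\pl)^{\alpha\log n-1}$, which is $n^{2-\Omega(\log\log n)}=o(1)$ when $\pl<c/\sqrt{\log n}$ for $c$ sufficiently small. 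The main technical obstacle is that the children of a case-(b)-created node may themselves have arisen via (a)/(c) extractions from longer intervals rather than direct merges, so a canonical merge subtree may not exist; resolving this requires iteratively expanding each extraction into the merge tree of its longer case-(b) ancestor, and verifying that the additional open leftward edges invoked in this expansion are still absorbed into the overall $\pl^{L-1}$ factor.
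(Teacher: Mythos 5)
Your treatment of parts (1), (3), and (2a) is correct and matches the paper's strategy. For (1) you couple upward to a uniform density $\max\{\pl,\pr\}$ and invoke the bounded-degree subcritical result (the paper applies Theorem~\ref{tg-subcr} directly). For (3) the downward coupling plus a first-moment ``abundance'' argument (each $i\to j\in\Eo\setminus\Eo'$ finds an intermediate $k$ a.a.s.\ since $np^2\to\infty$) is exactly the paper's Step 5. For (2a), observing that the rightward-only subprocess is closed (any split $i\to k\to j$ with both pieces rightward forces $i<k<j$) reduces to Catalan percolation, and the conclusion is Lemma~\ref{catalan-sup}; this is the paper's argument verbatim.

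The gap is in (2b), and you have identified it yourself but not resolved it. Your plan is a Catalan merge-tree enumeration, but the paper's tilde process (Section~\ref{subsec-left-subc}) has extraction moves (your cases (a) and (c)), which break the clean correspondence between occupied intervals and binary merge trees. Your proposed fix --- ``iteratively expanding each extraction into the merge tree of its longer case-(b) ancestor'' --- is not worked out and has two serious problems. First, after expansion the open edges appearing in the composite tree may repeat (the expanded subtrees live on overlapping vertex intervals), so the factor $\pl^{L-1}$ does not follow from independence; you would need a correlation inequality. Second, the enumeration is no longer controlled by the Catalan number $C_{L-1}$: each extraction introduces a further choice of a strictly longer parent interval, and expanding recursively the number of ``tree shapes'' is not obviously bounded by $4^L$. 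The paper sidesteps both issues by working with the \emph{minimal} interval $I_e$ rather than a merge tree: Lemma~\ref{Ie-good} shows $I_e$ is ``good'' (every consecutive pair $\{i,i+1\}\subset I_e$ is covered by a horn), Lemma~\ref{AL-tricky} (Aizenman--Lebowitz) produces such an interval of controlled size, Lemma~\ref{L_BK} extracts a linear fraction of horns with disjoint edge sets, and the BK inequality then yields the product bound. Restricting to the minimal interval is precisely what kills the unbounded regress of extractions, and the BK step is what replaces your (unjustified) independence of open edges across the expanded tree. If you want to carry your merge-tree intuition through, you would in effect need to re-derive the good-interval/horn/BK machinery of Section~\ref{sec-general-subcr}; it is not a shortcut.

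One small wording issue: when you set $\pr=1$ in the dominating process, rightward edges are \emph{open}, not occupied at $t=0$; by monotonicity you may instead start with all rightward edges occupied (as the paper's tilde process does), and this is what your subsequent case analysis implicitly assumes, so the error is cosmetic.
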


While it is not realistic to expect that simple
simulations can distinguish between $\sqrt{\log n}$ and 
a constant, we illustrate the three regimes 
guaranteed by Theorem~\ref{tc2} in Fig.~\ref{fig-tra}.

It appears to be a challenge to extend the subcritical case (1), due to interactions
between leftward
and rightward edges.

For comparison, we also state the following result for the oriented 
linear graph $G_0=L_n^\to$, where 
the edges $1\er 2\er\cdots\er n$ are initially occupied, all other 
rightward edges are open with 
some probability $\pr>0$, and all leftward edges are closed ($\pl=0$). For reasons 
that will become clear in Section~\ref{sec-catalan}, 
we introduce the term {\it Catalan percolation} for this 
instance of our
process. 
In contrast with the unoriented case $G_0=L_n$, where saturation occurs 
at a probability $(\log n)^{-1/2+o(1)}$ of open edges, in this oriented case  
the probability must be very close to $1$ for saturation. 
Part (3) of the following theorem calculates the asymptotics
of this probability. Parts (1) and (2) show that the threshold 
for ``near-saturation'' is of constant order, bounded away from 
0 and 1. 

\begin{theorem}\label{tc3}
Assume 
$G_0=L_n^\to$ is the oriented linear graph
on $V=[n]$ with edge set $E_0$ consisting of all edges $1\er 2\er\cdots\er n$.
Suppose that open leftward and rightward edges are chosen independently (from 
amongst those not in $E_0$) with probabilities  
$\pl=0$
and $\pr=p$. Then the following statements hold. 
\begin{itemize}
\item[{\rm (1)}] For any constant $p<1/4$, a.a.s.\ $E_\infty$ contains no edge
longer than $C\log n$, for some constant $C=C(p)$. 
\item[{\rm (2)}] There is a constant $p_u<1$ so that for all constants $p\in(p_u,1)$, 
a.a.s.\ $E_\infty$ contains all open edges of length $C'\log n$, for some 
constant $C'=C'(p)$.
\item[{\rm (3)}] If $p=1-\alpha n^{-1/2}$, for some constant $\alpha>0$, then 
the probability of saturation ($E_\infty$ contains all open rightward edges) approaches 
$e^{-\alpha^2}$ as $n\to\infty$. 
\end{itemize}
\end{theorem}  

To put our results in the context of the literature, 
let us note that the algorithm by which edges become 
occupied according to \eqref{E_Et}
is related to {\it graph bootstrap percolation}
\cite{Bol, BBM, BK20} 
(in particular, see the discussion following Problem 6
in \cite{BBM}), but in its analysis, as well as in its 
modeling of increasing partial knowledge, it more 
closely resembles {\it jigsaw percolation} \cite{BCDS, GS, BRSS, CKM}. 
As is clear from Fig.~\ref{fig-tra}, the supercritical regime 
in this process 
is characterized by {\it nucleation\/}. That is, local events create a network of occupied edges large enough to
be unstoppable: with high probability it continues to 
occupy edges on its boundary until, finally,  no
open edges remain unoccupied. Perhaps the most well-known nucleation 
process is {\it bootstrap percolation} \cite{PRK75,CLR79}, which has been studied 
in great detail and yielded numerous deep and surprising 
results. Here we only mention three milestone papers \cite{AL, Hol, BBDM}. 
Due to the fundamental significance of this model, methods
and concepts which have resulted from its study are likely useful in the analysis of any 
nucleation process, and ours is no exception. We should also
mention that the polluted version of bootstrap 
percolation has also been investigated \cite{GM, GH, GHS},  however, with
the emphasis on random initial states and thus on results of a different flavor.

By contrast, Catalan percolation and the related intermediate regime
have ties to classical results on random graphs: we establish the
constant-order threshold for the formation of a giant component, while saturation
is avoided primarily by the appearance of the shortest closed edges that
can prevent it, which is analogous to the containment problem
\cite{RG}.
Finally, we also mention the work of Karp \cite{Karp}, which studies strongly connected
components in directed random graphs and 
the time to complete the transitive closure
process.

\subsection{Outline}

Most of the rest of this article is devoted to proofs of the above 
three theorems. We in fact prove a bit more, and so
some of the statements will be given in a more general form. 
Due to the connections between the parts of these results, they are proved
in a different order than stated above: 
In Section~\ref{sec-general-subcr} we prove the subcritical result Theorem~\ref{tc1} (1) 
for bounded-degree initial graphs $G_0$. 
This implies Theorem~\ref{tc2} (1) for the linear graph $G_0=L_n$. 
Theorem~\ref{tc3} for Catalan percolation is proved in Section~\ref{sec-catalan}. Theorem~\ref{tc3} (2)
is used to establish  
the statement about rightward edges in the intermediate result Theorem~\ref{tc2} (2). 
The remainder of this result, concerning leftward edges, 
is dealt with in Section~\ref{sec-tricky}.
Section~\ref{sec-supercr} establishes the supercritical result 
Theorem~\ref{tc1} (2), which implies 
Theorem~\ref{tc2} (3). 
We conclude with  Section~\ref{sec-open}, which contains 
a selection of open problems.

\subsection{Notation}
We use standard asymptotic notation throughout, such 
as $f\ll g$ and $f=o(g)$ if $f(n)/g(n)\to0$ as $n\to\infty$.
In particular, $o(1)$ denotes
a function $f$ such that $f\ll1$. 

\subsection{Acknowledgments}

We thank the referees for their comments that 
helped improve this work. 
JG was partially supported by the NSF grant DMS-1513340 and the Slovenian Research Agency
research program P1-0285. BK was 
affiliated with the University of California, Berkeley
while the research 
and writing of this article took place, and was 
partially supported by an NSERC Postdoctoral Fellowship.

\section{Subcritical regime for bounded-degree initial graphs}
\label{sec-general-subcr}

In this section, we prove Theorem~\ref{tc1} (1), which recall implies Theorem~\ref{tc2} (1). 

We begin with a series of deterministic Lemmas~\ref{connected-necessary}, \ref{first-connection} 
and \ref{Ie-lemma}
that provide a necessary condition for an edge $e$
to become occupied. Roughly speaking, Lemma~\ref{Ie-lemma} implies the existence
of a set $I_e$, larger than the length of $e$, with the property that each $v\in I_e$ is 
the base of a special type of 
oriented triangle, called a {\it horn}. The most crucial property of a horn is that it contains
at least one open edge.  
Horns will also play a key role in the subsequent Sections~\ref{sec-tricky} and \ref{sec-supercr}
(intermediate and supercritical regimes).
The next result Lemma~\ref{AL-property} establishes an 
Aizenman--Lebowitz \cite{AL} type property for the sizes of sets $I_e$ of eventually occupied 
edges. These results, together with a simple search algorithm Lemma~\ref{L_BK} 
for edge-disjoint horns based in a set $I_e$, imply the main result Theorem~\ref{tc1} (1).

 \begin{lemma}\label{connected-necessary}
 Assume that $i\ore j\in E_\infty$. Then there exists 
 an oriented path from $i$ to $j$ in $G_0$.
 \end{lemma}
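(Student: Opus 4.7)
The plan is to prove the statement by induction on the time $t$ at which the edge $i\to j$ first becomes occupied, establishing the stronger claim that every edge in $E_t$ admits an oriented path from its tail to its head in $G_0$. Since $E_\infty=\bigcup_{t\ge 0}E_t$, the conclusion for $E_\infty$ follows by taking the minimal such $t$.

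For the base case $t=0$, any edge $i\to j\in E_0$ is by definition an edge of $G_0$, which is itself an oriented path of length one from $i$ to $j$. For the inductive step, suppose the claim holds at time $t$, and consider an edge $i\to j\in E_{t+1}\setminus E_t$. By the dynamics \eqref{E_Et}, there must exist some intermediate vertex $k\in V$ with $i\to k\in E_t$ and $k\to j\in E_t$. The induction hypothesis yields oriented paths $\pi_1$ from $i$ to $k$ and $\pi_2$ from $k$ to $j$ in $G_0$, and the concatenation $\pi_1\pi_2$ is an oriented walk from $i$ to $j$ in $G_0$; extracting a simple path from this walk (or simply observing that the existence of a walk is equivalent to the existence of a path) completes the step.

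There is no real obstacle here: the argument is purely deterministic and uses only the transitivity rule defining the dynamics, together with the fact that $E_0\subset G_0$. Note that the open-edge constraint in \eqref{E_Et} plays no role in this lemma; the conclusion would hold even without it. The statement is essentially the observation that the transitive closure operation cannot create connectivity in $G_0$ that was not already present as a concatenation of existing $G_0$-edges.
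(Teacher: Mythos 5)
Your proof is correct and follows exactly the same approach as the paper: induction on the time of occupation, with the base case given by $E_0\subset G_0$ and the inductive step concatenating the two oriented paths provided by the inductive hypothesis for $i\to k$ and $k\to j$. Your extra remark about extracting a simple path from the concatenated walk is a harmless refinement that the paper omits.
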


 \begin{proof}
The proof is by induction on the time of occupation. 
The statement is immediate for edges in $E_0$. 
For an edge $i\to j\in E_{t+1}$, 
there are edges $i\to w\to j\in E_t$, and so by induction, 
oriented paths from $i$ to $w$ and from $w$ to $j$. 
Concatenating these paths, we obtain an oriented path from $i$ to $j$
(after deleting any loops).  
 \end{proof}
 
For sets $A,B\subset V$ we say that an edge $e$ is an edge from $A$ to $B$ if $e=a\to b$
for some $a\in A$ and $b\in B$. 

\begin{lemma}\label{first-connection} Assume $V_1\subset V$. 
Assume $E_\infty\setminus E_0$ contains an edge 
from $V_1$ to $V_2=V\setminus V_1$.
Then there exist vertices $v_1\in V_1$, $v_2\in V_2$ 
and $w\in V$ so that 
$v_1\to w\to v_2\in E_0\cup\Eo$,  
$v_1\to v_2\in \Eo$
and either 
(1) $w\in V_1$ and $w\to v_2\in E_0$, 
or else, (2) $w\in V_2$ and $v_1\to w\in E_0$.
 \end{lemma}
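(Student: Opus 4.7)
The plan is to inspect the \emph{first} crossing edge that becomes newly occupied and to extract the required witness directly from the two-step update rule~\eqref{E_Et}. Because $E_\infty\setminus E_0$ is assumed to contain some edge from $V_1$ to $V_2$, and because the sets $E_t$ are nondecreasing in $t$, there is a minimal $t\ge 0$ for which $E_{t+1}\setminus E_t$ contains an edge $v_1\to v_2$ with $v_1\in V_1$ and $v_2\in V_2$. By the definition of the dynamics, any edge in $E_{t+1}\setminus E_t$ is open, so at once $v_1\to v_2\in\Eo$.

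Next I would apply~\eqref{E_Et} to the edge $v_1\to v_2$ to produce $w\in V$ with
\[
v_1\to w,\ w\to v_2\in E_t\subset E_0\cup\Eo,
\]
and then split on whether $w\in V_1$ or $w\in V_2$. In the first case, $w\to v_2$ is itself an edge from $V_1$ to $V_2$; if it were not in $E_0$, it would belong to $E_s\setminus E_{s-1}$ for some $s\le t$, contradicting the minimality of $t+1$. Hence $w\to v_2\in E_0$ while $v_1\to w\in E_0\cup\Eo$, as required. The case $w\in V_2$ is symmetric: then $v_1\to w$ is the crossing edge, and the same minimality forces $v_1\to w\in E_0$, while $w\to v_2\in E_0\cup\Eo$.

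No genuine obstacle is anticipated, since the statement is essentially an unpacking of the update rule. The one point that requires a little care is formulating the minimality correctly: one must minimize over crossings in $E_\infty\setminus E_0$, not over crossings in $E_\infty$, so that any preexisting crossing edges in $E_0$ do not prevent the argument from producing an open edge $v_1\to v_2$.
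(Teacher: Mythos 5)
Your proof is correct and follows essentially the same route as the paper: pick the first time a crossing edge becomes occupied, invoke the update rule \eqref{E_Et} to extract an intermediate vertex $w$, and use minimality to force the sub-edge on the $V_1$ (resp.\ $V_2$) side to lie in $E_0$. Your closing remark about minimizing over $E_\infty\setminus E_0$ rather than $E_\infty$ is a clean articulation of the same care the paper takes by choosing $t\ge 1$.
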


 \begin{proof}
Let $t\ge 1$ be the first time that an edge $v_1\to v_2\in \Eo$ from $V_1$ to $V_2$
becomes occupied. 
Then, for some $w\in V$,   
$v_1\to w\to v_2\in E_{t-1}\subset E_0\cup \Eo$. 
Then, by the minimality of $t$, either (1) 
$w\in V_1$ and $w\to v_2\in E_0$, or else, (2) $w\in V_2$ and $v_1\to w\in E_0$.
 \end{proof}
 
For edges $e\in E_\infty$, we define 
$$\cI_e=\{V_0\subset V:\text{the subgraph of 
$(V,E_0\cup \Eo)$ 
induced by $V_0$ makes $e$ occupied}\}. 
$$
With each such $e$, we associate an arbitrary $I_e\in \cI_e$ of  
minimal cardinality. 

Our next lemma shows that if $v\in I_e$, then $v$ is in a triangle of 
a certain type. 

Let $K\subset V$. A vector $(v,x,y)\in K^3$ is a {\it horn} in $K$ if 
any of the following
conditions are satisfied (see Fig.~\ref{fig-horns}):
\begin{enumerate}
\item $x\to v\to y\in E_0$ and $x\to y\in \Eo$, 
\item $v\to y\in\Eo$, $v\to x\in E_0$ and $x\to y\in E_0\cup \Eo$,
\end{enumerate} 
or, in the opposite  orientation,
\begin{enumerate}
\item[3.] $x\lto v\lto y\in E_0$ and $x\lto y\in \Eo$, 
\item[4.] $v\lto y\in\Eo$, $v\lto x\in E_0$ and $x\lto y\in E_0\cup \Eo$. 
\end{enumerate} 

In all cases, we call $v$ the {\it base} 
and $y$ the {\it tip} 
of the horn.  

\begin{figure}[h!]
\centering
\includegraphics{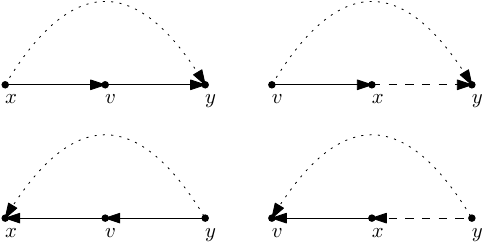}
\caption{The four ways $v$ can be the base and $y$ the tip of a horn. 
Initially occupied edges in $E_0$ are represented by solid arrows, open edges in $\Eo$
by dotted arrows, and edges in $E_0\cup \Eo$ by dashed arrows.
}
\label{fig-horns}
\end{figure}

\begin{lemma} \label{Ie-lemma} Assume 
$e\in E_\infty\setminus E_0$, and $v\in I_e$. 
Then $v$ is the base of a horn in $I_e$.
\end{lemma}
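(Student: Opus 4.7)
The approach is to exploit the minimality of $I_e$ by comparing the transitive-closure dynamics restricted to $I_e$ with those restricted to $J := I_e\setminus\{v\}$. Since $J\subsetneq I_e$, minimality forces $J\notin\cI_e$, so $e$ does not become occupied in the $J$-dynamics. The plan is to split into two cases based on whether $v$ is an endpoint of $e$, and in each case to extract one of the three horn configurations.

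\textbf{Case 1:} $v$ is not an endpoint of $e$, so $e$ is a $J$-edge. Let $A_t$ denote the occupied $J$-edges in the $I_e$-dynamics and $B_t$ the occupied edges in the $J$-dynamics; one always has $B_t\subset A_t$, and $e\in A_\infty\setminus B_\infty$, so a first time $t^*\ge 1$ exists with $A_{t^*}\supsetneq B_{t^*}$. Pick $f=i\to j$ in the difference. Then $f\in\Eo$, and $f$ is occupied in the $I_e$-dynamics at time $t^*$ via some intermediate $k\in I_e$; the vertex $k$ must equal $v$, for otherwise both $i\to k$ and $k\to j$ would lie in $A_{t^*-1}=B_{t^*-1}$, putting $f$ in $B_{t^*}$. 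So $i\to v$ and $v\to j$ are both occupied in the $I_e$-dynamics by time $t^*-1$. A short analysis on whether each arm lies in $E_0$ or $\Eo$ then produces a horn: if both arms are in $E_0$, take $(x,y)=(i,j)$ to get horn type~1 with tip $y$; if $v\to j\in\Eo$, appeal to the argument below for horn type~2; if $i\to v\in\Eo$, the mirror argument yields horn type~3.

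\textbf{Case 2:} $v$ is an endpoint of $e$, say $e=v\to b$ (the case $e=a\to v$ is symmetric). Then $e$ itself is an open outgoing edge of $v$ that is occupied in the $I_e$-dynamics, and the following \emph{first-open-outgoing-edge} argument applies; it also handles the sub-case of Case~1 with $v\to j\in\Eo$. Let $s\ge 1$ be the earliest time at which some open outgoing edge $v\to y$ of $v$ is occupied in the $I_e$-dynamics; then $v\to y$ is occupied via a path $v\to x\to y$ at time $s-1$, with $x,y\in I_e$ automatically. The arm $v\to x$ cannot be open, for otherwise it would itself be an open outgoing edge of $v$ occupied strictly before $s$, contradicting the choice of $s$. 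Hence $v\to x\in E_0$, while $x\to y\in E_0\cup\Eo$; together with $v\to y\in\Eo$, this is horn type~2 with tip $y$. The symmetric incoming-edge argument furnishes horn type~3 in the mirror sub-case.

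The step most likely to require care is the identification $k=v$ in Case~1, which rests on a bookkeeping argument that $A_t=B_t$ for all $t<t^*$ (so any occupation of a $J$-edge in the $I_e$-dynamics that happens earlier than in the $J$-dynamics must exploit the extra vertex $v$). Once this is in place, extracting the horn is a direct unpacking of the horn definitions using the update rule \eqref{E_Et}, and no further ingredients are needed.
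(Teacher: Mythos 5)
Your proof is correct and follows essentially the same strategy as the paper: exploit minimality of $I_e$ via a first-time argument on the $I_e$-induced dynamics, then unpack the update rule to exhibit a horn. The paper consolidates your two first-time arguments (your $t^*$ in Case~1 and $s$ in Case~2) into a single one over all open edges that require $v$, and invokes Lemma~\ref{first-connection} where your Case~2 re-derives its content in-line, but these are organizational rather than substantive differences.
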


\begin{proof}
Replace $V$ with $I_e$, and replace   
$G_0$ and $\Go$ with their subgraphs induced by $I_e$, so that 
$E_0$ and $\Eo$ now only contain edges between vertices in $I_e$. 

By the minimality of $I_e$, we have $I_e\setminus\{v\}\notin {\mathcal I}_e$. 
Therefore, informally,  
some edge
in $\Eo$ would not get occupied without the ``help'' of $v$.
Let $e'$ be some such edge in $E_t$, where $t\ge1$ is the 
first time such an edge becomes occupied. There are two cases: 

(a) If $v$ is an endpoint of $e'$, then either $e'=v\to y$ or $e'=v\lto y$
for some $y\in I_e\setminus\{v\}$. Therefore, by the choice of $t$, it follows in these cases
that $v\to x\in E_0$ and $x\to y\in E_{t-1}$ or  
$v\lto x\in E_0$ and $x\lto y\in E_{t-1}$ for some $x\in I_e\setminus\{v,y\}$, 
and hence that $v$ is the base of a horn in $I_e$. 

(b) On the other hand, if $e'=x\to y$
for some $x,y\notin I_e\setminus\{v\}$, then $x\to v\to y\in E_{t-1}$. 
If $x\to v\to y\in E_0$ then it is immediate that $v$ is the base of a horn 
in $I_e$. 
Otherwise, if 
$x\to v\notin E_0$
use Lemma~\ref{first-connection} with $V_2=\{v\}$ to see that 
$v$ is the base of a horn in $I_e$.
Similarly, if 
$v\to y\notin E_0$, 
use $V_1=\{v\}$ instead. 
 \end{proof}

Next, we state a crucial property for establishing 
the subcritical regime of our iterative growth process. This 
property, first formulated by Aizenman and Lebowitz \cite{AL}
in the context of bootstrap percolation, implies that the transitive closure 
dynamics create sets, with certain internal properties, 
of sizes on all scales 
smaller than the longest length of an occupied edge. The proof 
hinges on a {\it slowed-down\/} version of the dynamics, whereby 
at each time step we occupy a single open edge (that can be occupied by a transitive step).  
This edge is chosen arbitrarily from the available edges until no such edge exists. 
The monotonicity of the original process implies that any 
slowed-down version produces the same set of eventually occupied edges.
 
Recall that the length of an edge $e=i\to j$ is the number of edges in the shortest oriented path
($\infty$ if no such path exists) 
from $i$ to $j$ in $G_0$. 
 
 \begin{lemma}\label{AL-property}
Assume $e_0\in E_\infty$ has length $\ell$.
Then for 
every integer $k\in [1,\ell]$, there is an edge $e$ with 
$|I_e|\in [k+1, 2k]$. 
\end{lemma}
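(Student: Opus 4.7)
The plan is to construct a chain of edges $e^{(0)}=e_0,e^{(1)},\dots,e^{(m)}$ descending a genealogy of $e_0$, along which the sizes $|I_{e^{(t)}}|$ form a non-increasing sequence that starts at $\ge\ell+1$, ends at $2$, and satisfies $|I_{e^{(t+1)}}|\ge|I_{e^{(t)}}|/2$ at every step. Any such sequence must cross the window $[k+1,2k]$ for every $k\in[1,\ell]$: a single step cannot drop from above $2k$ to at most $k$ without violating the halving bound.

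The starting inequality $|I_{e_0}|\ge\ell+1$ is immediate from Lemma~\ref{connected-necessary} applied inside the dynamics restricted to $I_{e_0}$: the induced subgraph of $G_0$ on $I_{e_0}$ contains an oriented path from $i$ to $j$ (where $e_0=i\to j$), and since $e_0$ has length $\ell$ any such path uses at least $\ell+1$ vertices.

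For the recursive step, given $e=e^{(t)}$ with $|I_e|\ge 3$ (equivalently $e\in\Eo$), I run the slowed-down dynamics restricted to $I_e$; when $e$ is produced in this restricted process it comes from predecessors $e_1=i\to w$ and $e_2=w\to j$ for some $w\in I_e$. Since each $e_\alpha$ is occupied in $I_e$-restricted dynamics strictly before $e$, the set $I_e$ itself witnesses $e_\alpha$, giving $I_e\in\cI_{e_\alpha}$ and hence $|I_{e_\alpha}|\le|I_e|$. In the opposite direction, running the dynamics on $I_{e_1}\cup I_{e_2}$ first produces $e_1$ and $e_2$ from their respective witnesses and then $e$ by transitivity, so $I_{e_1}\cup I_{e_2}\in\cI_e$; combined with $w\in I_{e_1}\cap I_{e_2}$ this yields $|I_e|\le|I_{e_1}|+|I_{e_2}|-1$. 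Taking $e^{(t+1)}$ to be whichever of $e_1,e_2$ has the larger $|I|$ then supplies both the non-increase property and the halving bound $|I_{e^{(t+1)}}|\ge(|I_{e^{(t)}}|+1)/2$, and the chain terminates at an edge in $E_0$ with $|I|=2$. The main claim then follows: for each fixed $k$, the maximal index $t^*$ with $|I_{e^{(t^*)}}|\ge k+1$ either coincides with the leaf (forcing $k=1$ and $|I_{e^{(t^*)}}|=2\in[2,2]$), or else $|I_{e^{(t^*+1)}}|\le k$, in which case the halving bound yields $|I_{e^{(t^*)}}|\le 2k-1$.

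The main obstacle I anticipate is cleanly establishing $|I_{e_\alpha}|\le|I_e|$, which is subtle because each $I_e$ is defined as a globally minimum witness and a priori need not bound $|I_{e_\alpha}|$. The remedy above is to select the predecessors $e_1,e_2$ via the dynamics restricted to $I_e$ itself, rather than from some fixed global slowed-down run on $V$, so that $I_e$ is manifestly a witness for both $e_1$ and $e_2$.
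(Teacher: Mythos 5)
Your descending-genealogy argument is the paper's forward doubling argument run in reverse, built on the same key subadditivity $|I_e|\le |I_{e_1}|+|I_{e_2}|$ via $I_{e_1}\cup I_{e_2}\in\cI_e$. The one real issue is the assertion that ``the chain terminates at an edge in $E_0$.'' Because you recompute predecessors by running a fresh slowed-down process restricted to $I_{e^{(t)}}$ at each step $t$, there is no single clock along which the genealogy strictly descends: $e^{(t+1)}$ is occupied before $e^{(t)}$ in the $I_{e^{(t)}}$-restricted run, but this time scale changes at every step. The non-increase $|I_{e^{(t+1)}}|\le|I_{e^{(t)}}|$ and the halving lower bound only force the size sequence to stabilize at some $v\ge 2$; they do not prevent it from stabilizing at some $v\ge 3$, in which case the chain could, a priori, cycle among $\Eo$-edges forever and never reach $E_0$. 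Since your final deduction needs, for each $k$, a \emph{last} index $t^*$ with $|I_{e^{(t^*)}}|\ge k+1$, an infinite non-terminating chain would break the argument for small $k$.

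The fix is to drop the re-restriction to $I_{e^{(t)}}$, and with it the monotonicity $|I_{e^{(t+1)}}|\le|I_{e^{(t)}}|$, which you never actually use in the concluding paragraph. As in the paper, fix once and for all a slowed-down run restricted to $I_{e_0}$, and at each step take $e^{(t+1)}$ to be the parent of $e^{(t)}$ in \emph{that fixed run} having the larger $|I|$. Occupation times in the fixed run then strictly decrease along the chain, so it terminates at an $E_0$-edge with $|I|=2$, while $I_{e_1}\cup I_{e_2}\in\cI_{e^{(t)}}$ and hence $|I_{e^{(t)}}|\le 2|I_{e^{(t+1)}}|$ continue to hold. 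Your concluding paragraph then goes through verbatim; the sharper bound $|I_e|\le|I_{e_1}|+|I_{e_2}|-1$ from $w\in I_{e_1}\cap I_{e_2}$ is correct but not needed.
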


\begin{proof}
Remove all edges from $E_0\cup \Eo$ 
besides those between vertices of $I_{e_0}$, 
and then consider the slowed-down process, terminated once $e_0$ is occupied.   
If at some step an edge $e=x\to y\in \Eo$ is 
occupied by ``parent'' edges $e'=x\to z$ and $e''=z\to y$, then 
$I_{e'}\cup I_{e''}\in \cI_e$ and so 
$|I_{e}|\le |I_{e'}|+|I_{e''}|$. Therefore, at each step of the
slowed-down process, the maximal cardinality of $|I_e|$, over all edges $e$ 
occupied thus far, at most doubles. As this 
maximum starts at $2$ and ends at $|I_{e_0}|$, 
the claim follows, noting that $|I_{e_0}|\ge\ell+1$ by 
Lemma~\ref{connected-necessary} (and since, by assumption, $e_0$  has length $\ell$). 
\end{proof}

For the rest of this section, assume 
that the in-degrees and out-degrees of the initial 
graph $G_0$ are bounded by an integer
$D\ge 1$. 

In this setting, we collect one more lemma before turning
to the main result of this section. 

\begin{lemma}\label{L_BK}
Suppose that $K\subset V$ is such that all $v\in K$ are bases of horns in $K$. 
Then there is a set $K_0\subset K$ of size at least $|K|/(9D)$
so that horns (in $K$) for each $v\in K_0$ can be chosen so that their
edge-sets are pairwise disjoint. 
\end{lemma}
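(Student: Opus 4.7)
The plan is to reduce the statement to a standard independent-set bound. For each $v\in K$ I would fix arbitrarily a horn $H_v$ in $K$, and form the ``conflict'' graph $\cC$ on vertex set $K$ by placing $u\sim v$ whenever the (oriented) edge sets of $H_u$ and $H_v$ intersect. Any graph on $N$ vertices of maximum degree $\Delta$ admits an independent set of size at least $N/(1+\Delta)$ by a straightforward greedy argument, so it is enough to show that $\Delta(\cC)\le 13D^2-1$; the resulting independent set, together with the horns already chosen for its elements, will be the desired $K_0$.

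The main work is the degree bound. Fix $v\in K$ and let $\{v,p,q\}$ denote the vertex triple supporting $H_v$. A vertex $u\neq v$ with $u\sim v$ falls into one of two cases. Either $u\in\{p,q\}$, giving at most two choices, or $u\notin\{v,p,q\}$. In the latter case, the shared edge $e\in H_u\cap H_v$ cannot have $u$ as an endpoint, so both endpoints of $e$ lie in $\{p_u,q_u\}\cap\{v,p,q\}$, forcing $\{p_u,q_u\}$ to be a $2$-subset of $\{v,p,q\}$. Reading off the three horn types, $u$ is always joined in $E_0$ to $p_u$ (and, in the first type, also to $q_u$), so $u$ is a $G_0$-neighbor of some vertex of $\{v,p,q\}$. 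Since each vertex of $G_0$ has at most $D$ in-neighbors and at most $D$ out-neighbors, the second case contributes at most $3\cdot 2D=6D$ vertices, and consequently $\Delta(\cC)\le 2+6D$.

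For every integer $D\ge 1$ the inequality $2+6D\le 13D^2-1$ holds (since $13D^2-6D-3\ge 4$), so the greedy bound yields $|K_0|\ge |K|/(1+\Delta(\cC))\ge |K|/(13D^2)$. The step most likely to cause friction is the horn-type case analysis that produces the $E_0$-edge from $u$ into $\{p_u,q_u\}$: it must be carried out for each of the three horn types and for both possible labellings of the unordered pair $\{p_u,q_u\}$, and it is here that the asymmetry between $p_u$ and $q_u$ in the second and third types plays a role. Everything else is bookkeeping, and the quadratic constant $13D^2$ is merely a convenient overestimate of the sharper linear quantity $2+6D$.
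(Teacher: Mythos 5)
Your proof is correct, and it takes a genuinely different (and slightly sharper) route than the paper's. The paper runs a greedy search-and-delete algorithm: process vertices in order, and after choosing a horn at $v$ with auxiliary vertices $x,y$, delete from future consideration every vertex within $G_0$-distance $2$ of $v$ and distance $1$ of $x,y$; each step kills at most $1+2D+(2D)^2+2(1+2D)=3+6D+4D^2\le 13D^2$ vertices, so at least a $1/(13D^2)$ fraction survives into $K_0$. You instead fix one horn per vertex, form the conflict graph, and invoke the standard $N/(1+\Delta)$ independent-set bound. Your key degree analysis is tighter than the paper's deletion count: by noting that a conflicting $u\ne v$ either equals one of the two non-$v$ horn vertices, or (when $u\notin\{v,p,q\}$) has $\{p_u,q_u\}\subset\{v,p,q\}$ and is therefore forced by the horn definition to be an $E_0$-neighbor of some vertex of $\{v,p,q\}$, you get $\Delta\le 2+6D$ — linear rather than quadratic in $D$. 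The paper pays the $4D^2$ term because it deletes the entire radius-$2$ $G_0$-ball around $v$ rather than using the observation that the shared-edge constraint already pins down $\{p_u,q_u\}$. Both arguments land inside the stated $13D^2$, but yours establishes the stronger bound $|K_0|\ge |K|/(3+6D)$. The one small thing worth saying explicitly, which you gesture at in your last paragraph, is that in horn types 2 and 3 the $E_0$-edge is only guaranteed from $u$ to the ``$x$'' vertex; this is immaterial for the argument since both $p_u$ and $q_u$ are already known to lie in $\{v,p,q\}$, so whichever of them carries the $E_0$-edge still certifies $u$ as a $G_0$-neighbor of $\{v,p,q\}$.
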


\begin{proof}
This can proved by a simple search algorithm. Order the vertices of $K$ arbitrarily. 
Start with $K_0=\emptyset$ and another set $U=\emptyset$ of used vertices,
and enlarge them as follows. 
Let $d_0$ be the graph distance in $G_0$. 
In each step, find the first vertex $v$ such that $d_0(v,U)>1$
and a horn $(v,x,y)$ in $K$. 
Note that $x\notin U$, but  the tip $y$ could 
possibly be in $U$. Add $v$ to $K_0$ and all of $v,x,y$ to $U$. 
The proof now follows by observing that, after $t$ steps, 
there are at most $3(1+2D)t\le 9Dt$ vertices $u$ such that $d_0(u,U)\le1$, and 
that any horn based at some $v'$ with $d_0(v',U)>1$ does not involve any edges {\it between}
vertices in $U$. 
\end{proof}

Finally, we prove Theorem~\ref{tc1} (1), 
which we state below in a stronger form, as we do not need 
to assume that the initial graph is unoriented. 

\begin{theorem}\label{tg-subcr}
Assume that $G_0=(V,E_0)$ is a connected 
graph on $V=[n]$ with in-degrees and out-degrees
bounded by a constant $D$. 
Fix a constant
$\alpha>0$. 
Then there exists a constant $c=c(\alpha)>0$, 
so that if open 
(oriented) edges are chosen independently (from amongst 
those not in $E_0$) with probability $\po<c/\sqrt{\log n}$, then 
$$\prob{\text{some edge of length at least $\alpha\log n$ becomes occupied}}\to 0,$$
as $n\to\infty$. 
\end{theorem}

\begin{proof} 
The idea is to show that an occupied edge
of length $\ell=\lfloor\alpha\log n\rfloor$ or longer
implies the existence of many edge-disjoint horns, and so, many open edges. 
To this end, consider the unoriented graph 
$\widetilde G_0$ obtained 
from $G_0$ by ignoring orientation, i.e., $i\une j$
is an edge of $\widetilde G_0$ if either $i\el j\in G_0$ 
or $i\er j\in G_0$. 

First note that, for any fixed (deterministic) set
$K\subset V$ of size $k$,
\begin{equation}\label{E_horn}
\prob{v\text{ is the base of a horn in }K}
\le 
2(2D)^2\po+2Dk\po^2,
\end{equation}
where (see Fig.~\ref{fig-horns}) the first term bounds the event that $v$ is the base of a horn
in $K$ with only one open edge, and the other term
bounds the case of other types of horns involving two open edges. 
Next, by Lemma~\ref{L_BK} and the 
van den Berg--Kesten inequality \cite{BK85}  
we claim that, for any such $K$,
the probability that all vertices in $K$ are bases of horns in $K$ is at most 
\[
2^{k} (8D^2\po+2Dk\po^2)^{k/(9D)}. 
\] 
Indeed, the number of ways to select
$k/(9D)$ vertices in $K$  is ${k\choose k/(9D)}\le 2^{k}$. 
Further, any given given $k/(9D)$ vertices in $K$ 
are bases of edge-disjoint horns in $K$
with probability at most 
the upper bound in  \eqref{E_horn} to the power 
$k/(9D)$. 

Next, we claim that if an edge of length $\ell$ is occupied, then 
there is some $\widetilde G_0$-connected set $K$ of size
$k\in[\ell/2,\ell]$ such that all vertices $v\in K$ are bases of horns 
in $K$. To see this, note that if some $e_0$ of length at least $\ell$
is occupied then by Lemma~\ref{AL-property} 
there is an edge $e\in E_\infty$ with 
$|I_e|\in [\ell/2,\ell]$. By Lemma~\ref{connected-necessary} (with $k\in\lfloor\ell/2\rfloor$) $I_e$ is 
$\widetilde G_0$-connected, and 
by Lemma~\ref{Ie-lemma} 
every $v\in I_e$ is the base of a horn in $I_e$, giving the claim.

Finally, by e.g.\ Lemma 3.5 in \cite{GS}, 
 the number of 
$\widetilde G_0$-connected 
subsets of $V$ of size $k$ 
containing a given vertex is at most $(6D)^k$.
Putting all of the above together, a union bound yields 
\begin{equation*}
\begin{aligned}
&\prob{\text{some edge of length at least $\ell$ becomes occupied}}\\
&\le n\sum_{k=\ell/2}^\ell (6D)^k\cdot 2^k(8D^2\po+2Dk\po^2)^{k/(9D)}\\
&\le n\ell (12D)^\ell(8D^2\po+2D\ell\po^2)^{\ell/(18D)}\\
&=\alpha n\log{n}\left[12D(8cD^2/\sqrt{\log n}+2D\alpha c^2)^{1/(18D)}\right]^{\lfloor\alpha\log n\rfloor}
\ll 1
\end{aligned}
\end{equation*}
for all sufficiently small $c>0$.
 \end{proof}

\section{Catalan percolation}\label{sec-catalan}

In this section, we focus on Catalan percolation, which recall is 
the transitive closure process on the oriented linear graph $G_0=L_n^\to$, 
consisting of the initially occupied edges $1\er 2\er\cdots \er n$, 
in the case that all leftward edges are closed ($\pl=0$) and 
all rightward edges (of length at least 2) are open independently 
with some probability $\pr=p$. 
Observe that in this setting, the length of an edge $i\to j$ is simply $j-i$. 

We first prove Theorem~\ref{tc3} (1) and (2), stated as Lemmas~\ref{catalan-sub}
and \ref{catalan-sup} below, which together establish that the threshold 
for the occupation of ``long'' edges is of constant order, bounded between 0 and 1. 
The proof of Lemma~\ref{catalan-sub} is a simple combinatorial argument, but reveals
a connection with the Catalan numbers, which is the reason for the name of the process. 
On the other hand, Lemma~\ref{catalan-sup} is proved by noticing that a certain restriction of the dynamics 
can be described using {\it oriented percolation}  \cite{Dur}. 
The probability of saturation (occupation of all open edges), 
Theorem~\ref{tc3} (3), is discussed afterwards at the
end of this section. 

\begin{lemma}\label{catalan-sub}
For any constant $p<1/4$, there exists a constant $C=C(p)$ so that 
a.a.s.~all edges in $E_\infty$ have length at most $C\log n$.
\end{lemma}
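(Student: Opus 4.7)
The plan is to exploit the recursive witnessing structure that gives the model its name. Every occupied edge $i \to j$ of length $\ell$ admits a \emph{witness tree}: a full binary tree whose root is $i \to j$, whose two subtrees are witness trees for $i \to k$ and $k \to j$ (where $k$ is the splitting point used to occupy $i \to j$), and whose leaves are the initial unit edges $m \to m+1$. I would prove the existence of such a tree by induction on the time of occupation: a leaf witnesses an initial edge, and at each later step the combining of two previously occupied edges corresponds to joining two witness trees at a root. Crucially, the internal (non-leaf) nodes of the tree correspond to edges of length at least $2$, all of which must lie in $\Eo$ in order to be occupied; a full binary tree with $\ell$ leaves has exactly $\ell-1$ internal nodes.

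Next I would enumerate the possible witness trees. A full binary tree on the linear splits of an edge of length $\ell$ is in bijection with a parenthesization of $\ell$ ordered factors, so the number of such trees is the Catalan number $C_{\ell-1} = \frac{1}{\ell}\binom{2\ell-2}{\ell-1} \le 4^{\ell-1}$. For a fixed witness tree $T$ rooted at $i \to j$, the probability that all $\ell-1$ internal edges of $T$ are open is $p^{\ell-1}$, since these edges are distinct and each independently lies in $\Eo$ with probability $p$. A union bound over witness trees then gives
\[
\prob{i \to j \in E_\infty} \le C_{\ell-1}\, p^{\ell-1} \le (4p)^{\ell-1}.
\]

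Finally I would union bound over edges. For each length $\ell \in [2,n]$ there are at most $n$ edges, so
\[
\prob{\text{some edge of length } \ge L \text{ is in } E_\infty} \le \sum_{\ell \ge L} n\, (4p)^{\ell-1} \le \frac{n\,(4p)^{L-1}}{1-4p}.
\]
Since $p < 1/4$, choosing $L = \lceil C \log n \rceil$ with $C > 1/\log(1/(4p))$ makes the right-hand side tend to $0$, yielding the claim with constant $C = C(p)$.

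There is no real obstacle here once the witness-tree viewpoint is adopted; the only thing to be a little careful about is that distinct witness trees may share open edges, so the events are not disjoint, but this is immaterial for the union bound. One does not need the BK inequality; plain union bound over the $C_{\ell-1}$ trees, each of which demands $\ell-1$ prescribed independent open edges, is enough because $4p < 1$ provides exponential decay that dominates the Catalan growth.
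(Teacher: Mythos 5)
Your proof is correct and takes essentially the same approach as the paper: the paper counts the inclusion-minimal witnessing sets of open edges (observing they are of size $\ell-1$ and correspond to parenthesizations of an $\ell$-fold product, hence number $\le 4^\ell$), while you phrase the same object as a full binary witness tree with $\ell-1$ internal nodes and $C_{\ell-1}$ choices; the Catalan bound and the union bound then yield the identical estimate.
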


\begin{proof}
Assume $e$ is an oriented edge of length $\ell$. Let 
$\cE_e$ be 
the set of all inclusion-minimal 
sets of open edges (including $e$) that, together with edges in $E_0$, 
make $e$ occupied. 
By induction, it is easy to see that any $A\in\cE_e$ is of size $|A|=\ell-1$, 
and moreover $|\cE_e|=C_\ell$, the $\ell$th Catalan number. 
One way to see this is to consider computing a 
product of $a_1a_2\cdots a_{\ell}$ as described in Section~\ref{sec-intro}. 
Then each element in $\cE_e$ corresponds with a way of  
parenthesizing the product. 
Since $C_\ell\le 4^\ell$, it follows that 
\begin{multline*}
\prob{\text{an edge of length at least $C\log{n}$ becomes occupied}}\\
\le n^2 p^{-1}(4p)^{C\log n}\ll1
\end{multline*}
for all $C>-2/\log(4p)$. 
\end{proof}

In preparation for the proof of the next result, 
it will be useful to view the growth dynamics on $[n]^2$.
As such, we will often use the terms ``edge''
and ``site'' interchangeably when referring to an edge $i\to j$ and its corresponding 
site $(i,j)$. 
As in Fig.~\ref{fig-tra}, the site $(i,j)$ for an edge $i\to j$ is positioned in $[n]^2$ as
in the adjacency matrix (with the $y$-axis oriented downwards). 
The 
initially occupied sites are those in $\{(i,i+1):i=1,\ldots, n-1\}$ 
and only  the sites above this diagonal 
may ever become occupied. 
Open sites $(i,j)\in[n]^2$ become occupied once  there are occupied sites 
$(i,k)$ and $(k,j)$, for some $i<k<j$. 

\begin{figure}[h!]
\centering
\includegraphics[scale=0.8]{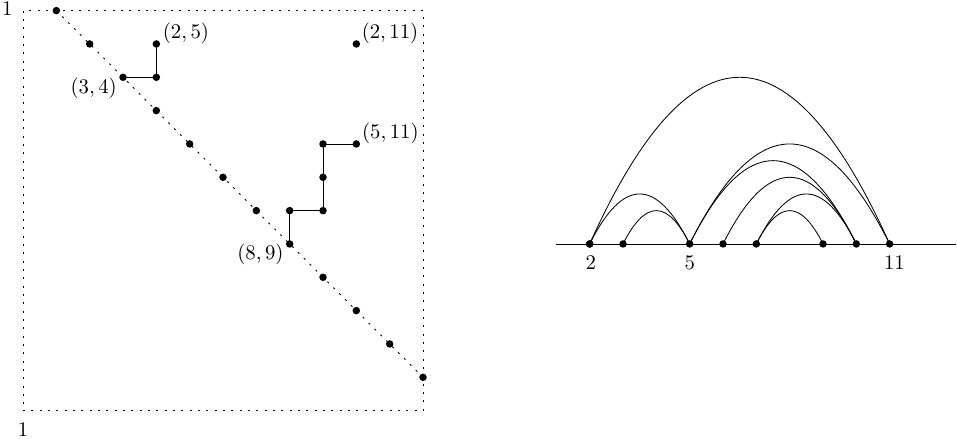}
\caption{
{\it At left:} The open edge $(2,11)$ becomes occupied
due to a pair of oriented paths of open sites, from $(3,4)$ to $(2,5)$ and 
from $(8,9)$ to $(5,11)$. 
{\it At right:} 
The same occupation process whereby edges
are represented as usual. 
Note that, until the very last step, 
each transitive step involves at least one initially occupied edge.
}
\label{fig-OP}
\end{figure}

The advantage of this point of view 
is its connection with oriented percolation. See Fig.~\ref{fig-OP}. 
It is easy to see (by induction) that an open site $(i,j)$ becomes occupied
if there is an {\it oriented percolation path} 
(moving one unit up or to the right in each step) 
along open sites, starting 
from some (initially occupied) site on the diagonal to $(i,j)$. 
Indeed, moving up from a site $(x,y)$ corresponds to occupying 
$(x-1)\to y$ due to $x\to y$ and $(x-1)\to x$ being occupied, 
and moving to the right corresponds to occupying 
$x\to (y+1)$ due to $x\to y$ and $y\to (y+1)$ being occupied.
This connection plays a crucial role 
in the proof of Lemma~\ref{catalan-sup} below, which shows 
by a contour/duality argument (standard for oriented percolation)
that with high probability
all open edges of $i\to j$ of length at least $C\log n$ become occupied  
due to  a pair of 
oriented paths from (possibly different starting points on) the diagonal to sites 
$(i,k)$ and $(k,j)$, for some $i<k<j$.

\begin{lemma}\label{catalan-sup}
There exists a constant $p_u<1$ so that 
the following holds. If $p>p_u$, then there exists a constant $C=C(p)$ so that 
a.a.s.~$E_\infty$ contains every open edge of length at least $C\log n$.
\end{lemma}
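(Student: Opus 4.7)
The plan is to couple Catalan percolation with supercritical oriented site percolation (OSP) on the upper-triangular lattice and then use stochastic domination by an independent Bernoulli product measure to obtain exponential decay in $\ell$ of the conditional failure probability for occupation of a long open edge.

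View the edges as sites of the upper triangle $T_n=\{(i,j):1\le i<j\le n\}$, each open independently with probability $p$ and always open along the diagonal $j=i+1$. Orient $T_n$ so that the in-neighbours of $(i,j)$ are $(i+1,j)$ and $(i,j-1)$, with the diagonal as source. Call a site \emph{wet} if it is open and at least one in-neighbour is wet. Because the transitive closure rule \eqref{E_Et} admits the choices $k=i+1$ and $k=j-1$ (using the always-occupied edges $(i,i+1)$ and $(j-1,j)$), any wet site is occupied. The wet process is ordinary oriented site percolation on $\mathbb Z^2$, whose critical probability $\pcsite$ is strictly less than $1$. Pick $p_u\in(\pcsite,1)$; for $p>p_u$, OSP is supercritical. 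Standard finite-size renormalization (Grimmett--Marstrand style) combined with the Liggett--Schonmann--Stacey inequality then yields a constant $L_1=L_1(p)$ and a density $\rho=\rho(p)>0$ such that, restricted to a coarsened sublattice of $T_n$ with spacing $L_1$ and to sites at distance $\ge L_1$ from the diagonal, the wet-site configuration stochastically dominates an i.i.d.\ Bernoulli$(\rho)$ product field.

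For a fixed open edge $(a,b)$ of length $\ell=b-a\ge C\log n$, let $K\subset(a+\ell/3,a+2\ell/3)$ be a set of $\Theta(\ell/L_1)$ candidate midpoints $k$ chosen so that both $(a,k)$ and $(k,b)$ lie on the coarsened sublattice. For each fixed $k\in K$, the events $\{(a,k)\text{ wet}\}$ and $\{(k,b)\text{ wet}\}$ depend on the disjoint edge-sets $[a,k]^2$ and $[k,b]^2$; under the dominating i.i.d.\ coupling all underlying sites are independent, so the $2|K|$ events are mutually independent, each of probability at least $\rho$. The probability that no $k\in K$ yields both ``$(a,k)$ wet'' and ``$(k,b)$ wet'' is therefore at most $(1-\rho^2)^{|K|}$, which is $o(n^{-2})$ once $C=C(p)$ is large enough. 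For any such successful $k$, both $(a,k)$ and $(k,b)$ are occupied, and the transitive rule \eqref{E_Et} then makes $(a,b)$ occupied. A union bound over the $O(n^2)$ long open edges concludes.

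The main obstacle is the stochastic-domination step: the wet field has long-range dependencies, since wetness at a site depends on the entire configuration of open edges between that site and the diagonal, so Liggett--Schonmann--Stacey cannot be applied directly to the wet indicators. The remedy is Grimmett--Marstrand-style renormalization — showing that sufficiently large blocks of $T_n$ contain a wet path with high probability — which produces a coarsened lattice of ``good'' blocks that are finitely dependent with density close to $1$, and to which Liggett--Schonmann--Stacey does apply. The adaptation of this standard machinery to the triangular lattice with diagonal source is routine but requires some care near the diagonal boundary.
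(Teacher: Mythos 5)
Your Step~1 (wetness along the in-neighbours $(i+1,j)$, $(i,j-1)$ implies occupation) and the overall ``find a good midpoint $k$'' strategy coincide with the paper's Steps~1 and~3. The gap is in how you quantify the existence of a good midpoint.

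You write that, under the dominating i.i.d.\ coupling, the $2|K|$ events $\{(a,k)\text{ wet}\}$, $\{(k,b)\text{ wet}\}$, $k\in K$, are mutually independent, and deduce the bound $(1-\rho^2)^{|K|}$. This is false, and it is the load-bearing step. For $k_1\ne k_2$, the events $\{(a,k_1)\text{ wet}\}$ and $\{(a,k_2)\text{ wet}\}$ both depend on the entire configuration of open sites in $[a,\min(k_1,k_2)]^2$, so they are far from independent. Liggett--Schonmann--Stacey cannot repair this: it lets you dominate a $k$-dependent \emph{coarse-grained} indicator field by a product measure, but it cannot change the joint law of the wetness indicators themselves, and ``the underlying sites are independent'' (which is already true in the original model) does not transfer independence to the wetness events, which are global functions of those sites. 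Worse, all these wetness events are increasing, so by FKG their complements (the failure events) are \emph{positively} correlated, and the inequality that actually holds, namely $\prob{\text{no $k$ works}}\ge\prod_k\prob{\text{failure at $k$}}$, goes the wrong way. Even granting a fully worked-out renormalization, the dominated i.i.d.\ field lives on coarse ``good block'' indicators, and you would still need a separate percolation/Peierls estimate on that coarse lattice to conclude that a wet crossing from the diagonal to a fixed target exists with probability $1-e^{-\Omega(\ell)}$ --- at which point you have reproduced the missing ingredient rather than circumvented it.

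The paper supplies exactly this missing estimate with a direct contour argument (its Step~2): it shows that for $p$ close enough to $1$, with probability $\ge 1-2\cdot 8^\ell(1-p)^{\ell/8}$, strictly more than half of the sites in the first column $L=\{(1,i):2\le i\le\ell+1\}$ are wet, and by symmetry the same for the last row $L'$; the pigeonhole principle then produces a common index $i$ with $(1,i)$ and $(i,\ell+1)$ both wet. This replaces your independence step and avoids renormalization entirely, at the cost of only proving the result for $p$ close to $1$ (which is all the lemma claims). To make your route rigorous you would need to state and prove the renormalized crossing estimate explicitly --- and that estimate is, in essence, the paper's contour bound.
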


\begin{proof} We divide the proof into three steps.
Recall that, as discussed above, 
we identify edges $i\ore j$ with sites $(i,j)\in [n]^2$.  

\noindent{\bf Step 1.} Assume that, for $i<j$, there 
exists an oriented percolation path of open or initially occupied sites
connecting a site in $G_0$ to $i\er j$. Then $i\ore j\in E_\infty$.

As already sketched above, the proof of Step 1 is a simple induction argument on the 
length $\ell$ of $i\ore j$. The claim holds when 
$\ell=1$ as those edges are in $E_0$. 
Otherwise, for $\ell>1$, the oriented percolation path from the diagonal to $(i,j)$
must visit either $(i,j-1)$ or $(i+1,j)$ (i.e., either the site to the left or below $(i,j)$
in the adjacency matrix)
before reaching $(i,j)$. So, by the induction hypothesis, 
either $i\ore (j-1)$ or $(i+1) \ore j$ becomes  
occupied. Then, since $(j-1)\ore j$ and
$i\ore (i+1)$ are initially occupied, the claim follows. 

\noindent{\bf Step 2.} Fix an $\ell>1$.  
Let $F_\ell$ be the event that
strictly more than $\ell/2$ sites on 
$L=\{(1,i):2\le i\le \ell+1\}$ 
are connected to $G_0$ through oriented percolation paths. 
Then, for $p>1-2^{-32}$,  we claim that 
$$P(F_\ell^c)\le 2\cdot 8^\ell (1-p)^{\ell/8}.
$$ 

This follows by a typical 
contour argument
 (see e.g.\ \cite{Dur} Section~10). 
 
 Choose any subset $S$
of $L$ of size at least $\ell/2$, and assume that 
$S$ is exactly the set of sites 
that are not connected to $G_0$ by oriented 
percolation paths. Write 
$S=S_1\cup \cdots\cup S_m$, 
where $S_i$ are non-adjacent intervals. Then, by a standard duality argument, 
there exist disjoint paths 
$\pi_i:x_0^{(i)},\ldots, x_{t_i}^{(i)}$, $i=1,\ldots,m$,  
such that (1) 
$||x_{j}^{(i)}-x_{j-1}^{(i)}||_\infty=1$ for $j=1,\dots,t_i$, (2) $x_0^{(i)}$ and 
$x_{t_i}^{(i)}$ are the endpoints 
of $S_i$, and (3) such that 
at least $t_i/4$ sites on $\pi_i$ (determined
as a function of $\pi_i$) are closed. 

Form a path $\pi$ by connecting together 
all intervals in $L\setminus S$ and all paths $\pi_i$. 
As $|L\setminus S|\le \sum_it_i$, the proportion 
of closed sites on $\pi$ is at least $1/8$. Trivially, the 
length $t$ of $\pi$ is at least $\ell$. 
It follows that 
$$
\prob{F_\ell^c}\le \prob{\pi\text{ exists}}\le \sum_{t\ge \ell}8^t(1-p)^{t/8},
$$
which establishes Step 2.

\noindent{\bf Step 3.} Conclusion of the proof, by the pigeonhole principle. 

Let $L'=\{(i,\ell+1):1\le i\le \ell\}$, and $F_\ell'$ the event that strictly more than $\ell/2$ sites on $L'$ 
are connected to $G_0$ through oriented percolation paths.
If $p$ is close enough to $1$, then by symmetry and Step 2, 
$$
\prob{F_\ell\cap F_\ell'}\ge 1-\exp(-\gamma\ell),
$$
for some constant $\gamma>0$ 
(not depending on $\ell$). 
Suppose that $F_\ell\cap F_\ell'$ occurs. Then, by Step 1 and the pigeonhole 
principle, there exists an 
$i\in [1,\ell]$, so that 
$(1,i)\in L$ and  $(i,\ell+1)\in L'$ are 
eventually occupied, in which case $(1,\ell+1)$ becomes occupied
if open. It follows that 
$$
\prob{(1,\ell+1)\text{ is open but never occupied}}
\le \prob{(F_\ell\cap F_\ell')^c}\le \exp(-\gamma\ell).
$$
Therefore, 
\begin{equation*}
\begin{aligned}
&\prob{\text{there is an open edge of length at least 
$C\log n$ that is never occupied}}\\&\le 
n^2\exp(-\gamma C\log n)\ll1
\end{aligned}
\end{equation*}
for any $C>2/\gamma$.
\end{proof}

The final task of this section 
is to address saturation for Catalan percolation. 

\begin{proof}[Proof of Theorem~\ref{tc3} (3)]
For $1\le i\le n-3$, 
let $Z_i$  be the indicator of the event that 
the edge $i\to(i+3)$ is open but never occupied (i.e., $i\to (i+2)$
and $(i+1)\to (i+3)$ are both closed). 
The random variable $N=\sum_i Z_i$ has $\E N=(n-3)(1-p)^2p$. 
Since $p=1-\alpha n^{-1/2}$, it follows that   
$\E N\approx n\cdot (\alpha n^{-1/2})^2\cdot 1=\alpha^2$. 
Furthermore, $N$ converges in distribution 
to a Poisson($\alpha^2$) random variable 
by an application of the Chen--Stein method \cite{BHJ}. 
Indeed, $Z_i$ and $Z_j$ are independent unless $|i-j|\le 1$,  therefore the total variation distance between 
(the distribution of) $N$ and
Poisson($\E N$) is bounded above by
\begin{equation*}
\begin{aligned}
&\sum_i\left[(\E Z_i)^2+\sum_{j:|i-j|=1} (\E Z_i\E Z_j+\E(Z_iZ_j))\right]\\
&\le n\left[(1-p)^4+2((1-p)^4+(1-p)^3)\right]
=\cO(n^{-1/2}).
\end{aligned}
\end{equation*}
Therefore,
\begin{equation}\label{eq-super-0}
\begin{aligned}
&\limsup_n\prob{\text{all open oriented edges become occupied}}
\\&\le 
\limsup_n\prob{N=0}=
\exp(-\alpha^2).
\end{aligned}\end{equation}
Now let 
$H_\ell$ be the event that 
$\ell$ is the minimal length of an unoccupied open edge. 
Note that if $N=0$ then all open edges of length 3 become occupied. 
Therefore 
\begin{equation}\label{eq-super-1}
\prob{\text{all open oriented edges become occupied}}
=
\prob{N=0}-\sum_{\ell\ge4}\prob{H_\ell}. 
\end{equation}
Note that, on the event  $H_\ell$, there is an 
edge $(i,i+\ell)$ so that, for all $1\le j< \ell$, 
either $i\to(i+j)$ or 
$(i+j)\to(i+\ell)$ is closed. It follows that 
\begin{equation}\label{eq-super-2}
\begin{aligned}
\prob{H_\ell}\le n\cdot 2^{\ell-1}(1-p)^{\ell-1}= (2\alpha)^{\ell-1}n^{1-(\ell-1)/2}.
\end{aligned}
\end{equation}
By \eqref{eq-super-1} and \eqref{eq-super-2}, 
\begin{equation}\label{eq-super-3}
\begin{aligned}
&\prob{\text{all open oriented edges become occupied}}\\
&\ge \prob{N=0}- \frac{(2\alpha)^3}{\sqrt{n}}\sum_{\ell\ge0}(2\alpha/\sqrt{n})^\ell\\
&= \exp(-\alpha^2)- \cO(n^{-1/2}).
\end{aligned}
\end{equation}
Putting the bounds \eqref{eq-super-0} and \eqref{eq-super-3} 
together 
completes the proof. 
\end{proof}

\section{Intermediate regime for linear initial graphs}\label{sec-tricky}

For an edge $i\to j$, we say that another edge $x\to y$ is {\it below} $i\to j$ if 
$x,y$ are between $i,j$. Similarly, we say that $x\to y$ is {\it above} $i\to j$ if
one of $x,y$ are on either side of $i,j$. 

In the Catalan percolation process studied above, 
where all edges are oriented in the same direction, 
an edge $i\to j$ can only become occupied due to other
edges below $i\to j$ becoming (or being initially) occupied. 
On the other hand, if leftward and rightward edges are present (initially 
occupied or open), 
then there are many ways in which they can interact, 
leading to the eventual occupation of various edges. 
See Fig.~\ref{fig-LRinteract}.

\begin{figure}[h!]
\centering
\includegraphics{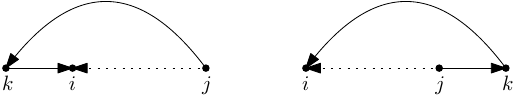}
\caption{A leftward open edge $i\lto j$
becomes occupied due to  
occupied edges $i\lto k\lto j$ of opposite orientations, 
for some $k\notin[i,j]$.}
\label{fig-LRinteract}
\end{figure}

\subsection{The tilde process}
\label{subsec-left-subc}

\newcommand{\tE}{\widetilde E}
\newcommand{\tEo}{\widetilde E_{\text{\rm open}}}
\newcommand{\tI}{\widetilde I_e}

It appears challenging to accurately control  
the interactions between leftward and rightward edges in any regime
in between that of Theorem~\ref{tc2} (1) and (3). 
We present a modest result Theorem~\ref{tc2} (2)
stating that,  when $\pl<c/{\sqrt{\log n}}$ and 
$\pr>A$, for small enough $c$ and large enough $A$, a.a.s.\ 
all open rightward edges longer than $\alpha\log n$ are eventually occupied, 
however, no such leftward edges are ever occupied. 
The statement about rightward edges follows by Lemma~\ref{catalan-sup} above.
To prove the other statement, we show that, even if {\it all} rightward edges
were to become occupied, a.a.s.\ no long leftward edges become 
occupied.  

More formally, we consider a modified {\it tilde process}
$\tE_t$, which describes the occupation of leftward edges in time, 
when {\it all} rightward
edges are assumed to be initially occupied.  
The set $\tE_0$ of initially occupied leftward 
edges is given by $1\lto 2\lto \cdots\lto n$.
The set $\tEo$ of open leftward edges 
is obtained by opening leftward edges (of 
length at least $2$) independently with probability $\pl=p$.
Given $\tE_t$, an edge $i\lto j\in \tE_{t+1}$, provided that  $i\el j\in \tEo$ 
and for some $k\in [n]$ we have that: 
\begin{itemize}
\item $i\el k\el j \in \tE_{t}$ and $i<k<j$; or
\item $k\el j\in \tE_{t}$ and $k<i$; or 
\item $i\el k\in \tE_{t}$ and $k>j$. 
\end{itemize}

In other words, in each step of the tilde process, either   an open leftward
edge becomes occupied due to a usual transitive step, or else,   some open leftward
edge becomes occupied which is below, and shares an endpoint with, 
a previously occupied leftward edge.

To show that these dynamics are 
subcritical for $p<c/\sqrt{\log n}$, when $c$ is small, 
we translate some of the ideas and definitions 
from Section~\ref{sec-general-subcr}.

We call an interval $I\subset [n]$ \emph{good}
if either
\begin{itemize}
\item $|I|= 2$; or 
 \item $|I|\ge 3$ and, for every $\{i,i+1\}\subset I$, there exists 
a $j\in I$ so that either (1) $j<i$ and 
$i\to j\lto i+1\in \tEo\cup \tE_0$, 
or else, (2) $j>i+1$ and 
$i\lto j\to i+1\in \tEo\cup \tE_0$.
\end{itemize}

When such a $j$ exits for $i\in I$, we say that $i$ is the base of a {\it tilde horn}
in $I$. 

Assume that an edge $e=i_1\el i_2\in \tE_\infty$. 
Associated with $e$, we let $\tI$ denote an interval $I$
of minimal cardinality such that graphs on $I$ induced by edges in 
$\tE_0\cup \tEo$ make $e$ occupied (by the tilde process dynamics). 
Note that $[i_1,i_2]\subset \tI$. 

The next lemma is an analogue of Lemma~\ref{Ie-lemma}.

\begin{lemma} \label{Ie-good} 
For any $e\in \tE_\infty$, 
the interval $\tI$ is good. That is, either $e\in \tE_0$, or else, 
for each $\{i,i+1\}\subset \widetilde {I_e}$, $i$ is the base of a 
tilde horn in $\tI$. 
\end{lemma}

\begin{proof}
For any $\{i_0, i_0+1\}\subset \widetilde I_e$, an open edge over the initially 
occupied edge $i_0\lto (i_0+1)$ 
must become 
occupied, or else the interval could be shortened.  
The first time $t_0\ge 1$ such an edge $i\el j$ becomes occupied,
it follows by the minimality of $t_0$ that $i\lto k\lto j\in E_{t_0-1}$
for some $i<k<j$. That is, $i\lto j$ becomes occupied by a usual transitive step. 
Moreover, again by the minimality of $t_0$, either 
(1) $i=i_0$ and $k=i_0+1$, 
or else, (2) $k=i_0$ and $j=i_0+1$.  
\end{proof}

We also need counterpart of Lemmas~\ref{AL-property} and \ref{L_BK}. 
We omit the proof, since they are almost identical.  

\begin{lemma} \label{AL-tricky}
Assume that $e_0\in \tE_\infty$ has length $\ell$. Then,
for every integer $k\in [1, \ell]$,  
there exists an edge $e$ with $|\tI|\in [k+1,2k]$. 
\end{lemma}

\begin{lemma}\label{BK-tricky}
Suppose that $K\subset V$ is such that all $v\in K$ are bases of tilde horns in $K$. 
Then there is a set $K_0\subset K$ of size at least $|K|/(9D)$
so that tilde horns (in $K$) for each $v\in K_0$ can be chosen so that their
edge-sets are pairwise disjoint. 
\end{lemma}

\begin{proof}[Proof of Theorem~\ref{tc2}~(2)]
As already mentioned, the statement 
for rightward edges follows by Lemma~\ref{catalan-sup}.
The statement for leftward edges
can be proved along the same lines as Theorem~\ref{tg-subcr}, but using Lemmas~\ref{Ie-good}--\ref{BK-tricky} instead of Lemmas~\ref{Ie-lemma}--\ref{L_BK}.
\end{proof}

\section{Supercritical regime for bounded-degree initial graphs}\label{sec-supercr}

\newcommand{\abun}{\text{\tt abundance}}
\newcommand{\horns}{\text{\tt horns}}
\newcommand{\connect}{\text{\tt connect}}
\newcommand{\sat}{\text{\tt first\_sat}}

Finally, we prove the supercritical result Theorem~\ref{tc1} (2)
for bounded-degree initial graphs $G_0$. 
Recall that this result implies Theorem~\ref{tc2} (3). 
Before turning to the proof, we state a few
preliminary observations, and briefly discuss some of the main parts
of our strategy.  

First, note that, we can assume that $G_0$ is an unoriented tree
(i.e., replace $G_0$ it by a spanning subtree if necessary). 
Then using a result from \cite{BCDS}, which  
follows from Lemma 6.1 in \cite{GS}, we obtain a large number 
of edge-disjoint subtrees of   
$G_0$ of some (suitably chosen) size. 

\begin{lemma}\label{tree-cover} For any 
tree with $n$ vertices and integer $L\in [1,n-1]$ 
there exist $\lceil (n-1)/(2L^2)\rceil$ subtrees such that 
(1) each subtree has $L$ edges, and (2) any two subtrees have at 
most 1 vertex in common. 
\end{lemma}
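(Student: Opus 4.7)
The plan is to construct the required subtrees greedily via a bottom-up extraction along a post-order DFS of $T$. Root $T$ at an arbitrary vertex $r$ and traverse it in post-order. At each vertex $v$, after visits to its children $u_1,\ldots,u_d$ have returned, compute $f(v) = 1 + \sum_i f(u_i)$, where $f(u_i)$ counts the unpruned, non-orphaned vertices in $u_i$'s subtree at the moment the DFS backtracks from $u_i$; note that $f(u_i) \le L$, since the DFS at $u_i$ itself triggers extractions until this is achieved. While $f(v) \ge L+1$, I carve out a connected subtree $S$ of exactly $L+1$ vertices containing $v$, constructed by greedily absorbing whole children-subtrees until the next full absorption would exceed $L+1$ and then taking a top-down piece of one final child to reach the exact target. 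I record $S$ as one of the output subtrees, mark all vertices of $S \setminus \{v\}$ as pruned, mark any unpruned descendants of pruned children (leftovers of the partial absorption) as permanent \emph{orphans}, and update $f(v)$ accordingly.

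Property (1) is immediate by construction: every extracted $S$ has exactly $L$ edges. For property (2), note that every pruned or orphaned vertex is permanently removed from consideration the moment it is produced; hence two extracted subtrees can meet only at a single ``top'' vertex $v$ preserved from the earlier extraction. Equivalently, the extracted subtrees are pairwise edge-disjoint, since two subtrees of $T$ sharing two vertices would share the unique $T$-path between them, and in particular share an edge.

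For the counting, each extraction at a vertex consumes $L$ pruned vertices plus at most $L-1$ orphans (from at most one partial child's subtree), for a total of at most $2L - 1$ vertices per extraction. The procedure halts only when $f(r) \le L$, so the total number of consumed-or-orphaned vertices is at least $n-L$, giving a lower bound of $(n-L)/(2L-1)$ extractions. A short calculation shows this quantity is at least $(n-1)/(2L^2)$ for all $L \in [1, n-1]$, which establishes the claim.

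The main obstacle is verifying, as an invariant maintained throughout the DFS, that at every vertex $v$ the currently unpruned, non-orphaned descendants of $v$ together with $v$ form a connected subtree of $T$, so that the greedy construction of $S$ always succeeds in finding a connected subtree of size $L+1$ containing $v$. The partial-child step preserves this invariant: absorbing a top-down piece of one child and orphaning the rest of that child's subtree is, for the purposes of $v$'s subsequent extractions, equivalent to that child never having existed, and hence leaves the ``active'' descendants of $v$ connected through $v$.
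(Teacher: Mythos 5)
Your proof is correct and self-contained. Note that the paper does not actually prove this lemma: it cites it as Lemma~6.1 of \cite{GS} (via \cite{BCDS}), so there is no in-paper argument to compare against. Your bottom-up greedy extraction along a post-order DFS is a legitimate direct proof. The key points all check out: the invariant that the set of active (unpruned, non-orphaned) descendants of any vertex $v$ together with $v$ is a connected subtree of $T_v$ is preserved, since fully absorbed and partially absorbed children have \emph{all} of their active descendants either pruned or orphaned, leaving only $v$ and untouched children's subtrees, which is connected through $v$; a top-down connected piece of any prescribed size $1\le k\le m$ always exists in a connected $m$-vertex tree with a distinguished root (take the first $k$ vertices of a DFS from the root); two extracted subtrees can meet only at the extraction vertex of the earlier (deeper) one, since everything else from the earlier subtree is pruned; and the accounting $k(2L-1)\ge n-L$ together with the elementary inequality $2L^2(n-L)\ge(2L-1)(n-1)$ for $n\ge L+1$ (at $n=L+1$ the difference is exactly $L\ge 0$, and the left side grows faster in $n$) gives $k\ge (n-1)/(2L^2)$, hence $k\ge\lceil(n-1)/(2L^2)\rceil$ since $k$ is an integer. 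This is essentially the standard decomposition used for such covering lemmas, and it establishes the stated bound cleanly.
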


Next, we prove the following lemma, which we will use in 
describing the spread of occupied edges via nucleation
in the supercritical regime. Note that, once again, horns 
are playing a crucial role in our arguments. 

\begin{lemma}\label{L_nuc}
Assume that some subtree $T\subset G_0$ is internally saturated. 
Suppose that for every neighboring (in $G_0$) vertices 
$v,v'$ which are not both in $T$, 
(1) there is a $y\in T$ so that edges  
$v\lto y\to v'\in \Eo\cup E_0$ (oriented away from $T$), 
and (2) the set
$U_v^\to$ of endpoints $u\in T$ of edges $u\to v$ is 
strongly connected
by edges in $\Eo\cup E_0$. 
Then all edges from $T$ to $G_0\setminus T$ are eventually occupied. 
Likewise, a symmetric statement also holds in the reverse orientation
(i.e., towards $T$). 
\end{lemma}

\begin{proof}
This follows by a straightforward induction on the distance 
(in $G_0$) of a vertex $v\notin T$ to $T$. 
In (1),
we choose $v'$ to be the neighbor  
(in the unoriented  tree $G_0$) of $v$ that is closest to $T$. 
Then, by the inductive hypothesis, $y\to v'$ is eventually occupied. 
Therefore, since $v'\leftrightarrow v\in E_0$, it follows that $y\to v$ is eventually 
occupied. Next, by (2), there is a collection of oriented
paths such that all (a) end at $y$, (b) visit only vertices in $U_v^\to$, 
and (c) together visit all points in 
$U_v^\to$. Therefore, starting with the eventually occupied edge $y\to v$, it follows by another induction (on the distance to $y$ along such oriented paths 
ending at $y$) that 
all edges from $U_v^\to$ to $v$ are eventually occupied. 
Informally, we can backtrack (started from $y$) along such a path to $y$ until we eventually reach any given $u\in U_v^\to$, occupying edges from this path 
to $v$ along the way. 
\end{proof}

Therefore, supposing that one of the subtrees $T\subset G_0$ (of size $L$, to be 
determined below) given by 
Lemma~\ref{tree-cover} is internally saturated, 
all other open edges in $\Eo$ become occupied by 
Lemma~\ref{L_nuc}, provided that $L$ is large enough so that,  
a.a.s.\ for all $x,y\notin T$,  there are edges $x\to u\to y$ for some $u\in T$. 
Showing that at least one such subtree is internally saturated
follows similarly, however, on this smaller
scale slightly more delicate arguments are required. 

In order to apply Lemma~\ref{L_nuc}, we will require the following standard result about the connectivity 
of oriented \ER random graphs, the proof of which we only briefly sketch.

\begin{lemma} \label{oriented-ER-connectivity}
Assume $G$ is an oriented 
\ER random graph on $n$ points with edge probability $p$. If $p=c\log n/n$ with $c>1$,
then 
$$
\prob{G\text{ is  not strongly connected}}=\cO(n^{1-c}).
$$
If $p=n^{-\alpha}$, for some $\alpha<1$, then 
$$
\prob{G\text{ is  not strongly connected}}\le \exp(-n^{1-\alpha}/2).
$$
\end{lemma}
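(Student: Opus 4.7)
The plan is to use the standard cut characterization of strong connectivity: a directed graph $G$ on $V$ is strongly connected if and only if, for every non-empty proper subset $S \subsetneq V$, there is at least one edge from $S$ to $V \setminus S$. (If not strongly connected, take $S$ to be the set of vertices reachable from some vertex $u$ that does not reach some other vertex $v$; then $S$ has no outgoing edges.) Once this is in hand, the proof is a one-step union bound: for each fixed $S$ with $|S|=k$, the probability that there is no edge from $S$ to $V\setminus S$ in the oriented \ER model is exactly $(1-p)^{k(n-k)}$, so
\begin{equation*}
\prob{G\text{ not strongly connected}} \le \sum_{k=1}^{n-1}\binom{n}{k}(1-p)^{k(n-k)}.
\end{equation*}

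For part (1), where $p=c\log n/n$ with $c>1$, I would estimate each term using $(1-p)^{k(n-k)}\le e^{-pk(n-k)}=n^{-ck(n-k)/n}$ and $\binom{n}{k}\le (en/k)^k$. Split the sum at $k=n/2$ (using the symmetry $k\leftrightarrow n-k$), and observe that for $k\le n/2$ one has $k(n-k)/n\ge k/2$, while for small $k$ the sharper estimate $(n-k)/n\ge 1-k/n$ gives $\binom{n}{k}(1-p)^{k(n-k)}\le e^{(1+c/2)k} k^{-k} n^{(1-c)k}$. The $k=1$ term contributes $O(n^{1-c})$; the sum for $k\ge 2$ is $O(n^{2(1-c)})$ which is negligible; and the bulk range $k\ge \log n$ is crushed by $(1-p)^{k(n-k)}\le n^{-ck/2}$ against the $\binom{n}{k}\le 2^n$ bound. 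Summing yields the claimed $O(n^{1-c})$.

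For part (2), with $p=n^{-\alpha}$ and $\alpha<1$, use $\binom{n}{k}\le n^k$ and $(1-p)^{k(n-k)}\le e^{-pk(n-k)}$. For $k\le n/2$, $k(n-k)\ge kn/2$, so the $k$-th term is at most $n^k e^{-kn^{1-\alpha}/2}=\exp\bigl(k(\log n - n^{1-\alpha}/2)\bigr)$. For $n$ large enough (depending only on $\alpha$), $n^{1-\alpha}/2$ dominates $\log n$, so the series is geometric with first term essentially $n\exp(-n^{1-\alpha}/2)$; the same bound holds on $k\ge n/2$ by the $k\leftrightarrow n-k$ symmetry. Summing gives a bound of order $n\exp(-n^{1-\alpha}/2)$, which is absorbed into $\exp(-0.5\,n^{1-\alpha})$ for all sufficiently large $n$.

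The only mild obstacle is the bookkeeping in part (1): the constant $c>1$ is close to critical, so one has to track the $k=1$ contribution precisely rather than throw it away into a cruder bound, and be careful that the correction $k^2/n$ in $k(n-k)/n$ does not swallow the exponent $1-c$. Nothing else really needs a new idea; the rest is just summing geometric series.
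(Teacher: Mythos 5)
Your approach --- union bound over cuts, with the characterization that failure of strong connectivity is witnessed by a nonempty proper vertex set with no outgoing edges --- is the same as the paper's, which phrases it equivalently as a set of size at most $n/2$ with no outgoing or no incoming edges and thereby picks up the factor of $2$ that your $k\leftrightarrow n-k$ symmetrization also produces. The trouble is in the estimates, where there are two genuine gaps.

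For part (1), the claim that for $k\ge\log n$ the term is ``crushed by $(1-p)^{k(n-k)}\le n^{-ck/2}$ against $\binom{n}{k}\le 2^n$'' is false. At $k=\lceil\log n\rceil$ that bound reads
\[
2^n\,n^{-c\log n/2}=\exp\Bigl(n\log 2-\tfrac{c}{2}(\log n)^2\Bigr)\to\infty,
\]
so the crude $2^n$ only wins once $k$ is of order $n/\log n$. Moreover, your ``sharper estimate'' uses $n^{ck^2/n}\le e^{(c/2)k}$, which requires $k\le n/(2\log n)$; the intermediate range $n/(2\log n)\lesssim k\lesssim n/\log n$ is therefore covered by neither of your two regimes. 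The paper avoids this by keeping $\binom{n}{k}\le(en/k)^k$ at all scales, writing the $k$-th term as $\exp[-k(pn+\log k-pk-\log n-1)]$ and splitting at $k=p^{-1/2}\asymp\sqrt{n/\log n}$: the leading $pn-\log n=(c-1)\log n$ in the bracket handles small $k$, and $\log k$ overwhelms the deficit for $k>p^{-1/2}$. You would land in the same place by retaining $(en/k)^k$ uniformly rather than switching to $2^n$.

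For part (2), the concluding step runs backwards. Your bound on the sum is $n\exp(-n^{1-\alpha}/2)=\exp(\log n-n^{1-\alpha}/2)$, which is \emph{larger} than the target $\exp(-0.5\,n^{1-\alpha})$ by a factor of $n$, so it is not ``absorbed into'' it. The factor of 2 you give away via $k(n-k)\ge kn/2$ at $k=1$ is precisely what you cannot afford. Using the exact $k(n-k)=n-1$ at $k=1$ gives the $k=1$ term at most $n\,e^{-n^{1-\alpha}+n^{-\alpha}}$, and since $\log n+n^{-\alpha}\le n^{1-\alpha}/2$ for large $n$, this is at most $\exp(-n^{1-\alpha}/2)$; the $k\ge2$ terms are then negligible by comparison, recovering the stated bound.
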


\begin{proof}
If $G$ is not strongly connected, then there exists a nonempty 
set $A$ of $k\le n/2$ 
points so that there are no outward connections, or no inward connections, from 
$A$ to
$A^c$. Therefore (using the bounds ${n\choose k}\le (ne/k)^k$
and $(1-x)\le e^{-x}$), 
\begin{equation*}
\begin{aligned}
&\prob{G\text{ is not strongly connected}}\\
&\le 2\sum_{k=1}^{\lfloor n/2\rfloor}\binom{n}{k}(1-p)^{k(n-k)}\\
&\le 2\sum_{k=1}^{\lfloor n/2\rfloor}\exp[-k(pn+\log k-pk-\log n-1)].
\end{aligned}
\end{equation*} 
The desired inequalities then follow by dividing the above sum 
into two sums over $k\le p^{-1/2}$ and $k>p^{-1/2}$.
\end{proof}

Finally, we note that 
in our context, an event $\mathcal E$
(i.e., a subset of the sample space $\Omega$ of all possible configurations $\omega$ of open and closed edges) 
is {\it increasing}  
if $\omega \in E$ implies $\omega_+ \in E$ whenever 
$\omega_+$ contains
    all open edges in $\omega$. 
In other words, an event is increasing if it 
cannot be destroyed by the addition of open edges. 
Note that the event 
$\{V\text{ is saturated}\}$ is not increasing. To deal 
with this nuisance, we say that a set $E$ 
of oriented edges
between vertices in $V$  
is {\it abundant\/} if
for every $i,j\in V$, there exists 
$k\in V$ so that $i\er k\er j\in E$. We record the 
following simple observations. 

\begin{lemma}\label{ab-sat}
The event $\{E_\infty\text{ is abundant}\}$ is increasing  and 
$$
\{E_\infty\text{ is abundant}\}\subset \{V\text{ is saturated}\}. 
$$
\end{lemma}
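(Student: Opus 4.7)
The plan is to handle the two claims separately, both of which reduce to short monotonicity/closure observations about the dynamics in~\eqref{E_Et}.

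For the first claim, I would first establish that $E_\infty$ is a monotone function of the set of open edges. Comparing two realisations with the same initial graph $G_0$ but open-edge sets $\Eo \subset \Eo'$, a straightforward induction on $t$ using the update rule~\eqref{E_Et} shows $E_t \subset E_t'$ for every $t$, and hence $E_\infty \subset E_\infty'$. Since abundance is an upward-closed property of an edge set (enlarging a set of oriented edges can only create more two-step witnesses $i \er k \er j$, never destroy them), combining these two observations yields that $\{E_\infty \text{ is abundant}\}$ is preserved under adding open edges, i.e.\ it is an increasing event.

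For the inclusion $\{E_\infty \text{ is abundant}\} \subset \{V \text{ is saturated}\}$, I would use the fact that $E_\infty$ is a fixed point of the update rule. Indeed, the sequence $E_t$ is nondecreasing and takes values in the finite collection of subsets of $V \times V$, so it stabilises at some finite time, and $E_\infty$ satisfies
\[
E_\infty = E_\infty \cup \{i \er j \in \Eo : i \er k \er j \in E_\infty \text{ for some } k \in V\}.
\]
Now assume $E_\infty$ is abundant and fix an arbitrary open edge $i \er j \in \Eo$. By definition of abundance, there is some $k \in V$ with $i \er k \er j \in E_\infty$, and the fixed-point identity above then forces $i \er j \in E_\infty$. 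Hence every open edge is eventually occupied, which is exactly the statement that $V$ is saturated.

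Both parts are essentially bookkeeping, so I do not expect any substantive obstacle; the only minor subtlety is recognising that $E_\infty$ is itself a fixed point of the dynamics, which is immediate from the monotonicity of $(E_t)$ together with the finiteness of $V \times V$.
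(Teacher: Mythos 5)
Your proof is correct. The paper states this lemma as a ``simple observation'' without supplying a proof, so there is no textual argument to compare against, but your two-step reasoning is exactly what the authors evidently had in mind: (i) monotonicity of $E_\infty$ in $\Eo$ by induction on the update rule, plus upward-closedness of the abundance property, gives that the event is increasing; (ii) abundance of the fixed point $E_\infty$ forces every open edge $i\to j$ to have a two-step occupied witness, and hence to be occupied itself, giving saturation. Both parts are complete and airtight; the observation that $E_\infty$ is a fixed point of the dynamics (which follows from monotone stabilisation on a finite edge set) is indeed the one small thing worth spelling out, and you did.
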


We now prove the following result, 
which immediately implies 
Theorem~\ref{tc1} (2).

\begin{theorem} \label{supercr-thm} 
Assume that $G_0=(V,E_0)$ 
is an unoriented  connected graph. Suppose  
that open 
(oriented) edges are chosen independently (from amongst 
those not in $E_0$) with probability $\po\ge C\log\log n/\sqrt{\log n}$, for some $C>4$.
Then, with high probability,  
$V$ is saturated.
\end{theorem}

\begin{proof} 
We divide the proof into several steps.  
In Step 1, we select edge-disjoint subtrees of $G_0$ of suitable sizes. 
Steps 2 and 3 show that if any of these trees are saturated, then a.a.s.\ 
so is $V$. By similar, but more delicate reasoning, we show 
in Step 4
that indeed 
a.a.s.\ at least
one such tree is internally saturated. The final Step 5 
extends these results to larger $p$.

\noindent{\bf Step 1.}
Recall that, for simplicity we may assume,  without loss of generality,
that $G_0$ is edge-minimal, that is, a
spanning tree. 
Fix $C>8$ and  put 
\[
k=\left\lceil \frac{\log n}{2\log\log n}\right\rceil, 
\quad 
p=\sqrt{\frac {C\log k}k}.
\]
By Lemma~\ref{tree-cover}, we fix subtrees $T_m$, $m=1,\ldots, \lceil n/(4k^{6})\rceil$, 
of size $k^3$,  
no two of which share more than a single vertex. 
Generate the configuration of open 
edges $\Eo$ with $\po=p$.

\noindent{\bf Step 2.} We claim that a.a.s.\ all subtrees $T_m$ have the following properties: 
\begin{enumerate}
\item For all $j_1,j_2\in[n]$ there are $i_1,i_2,i_3\in T_m$ such that 
all edges 
$j_1\lto i_1\to j_2$, $j_1\to i_2\lto j_2$ and 
$j_1\to i_3\to j_2$  
are in $\Eo\cup E_0$. 

In particular, for every $j_1\lr j_2\in E_0$, 
the edges $j_1\lto i_1\to j_2$, $j_1\to i_2\lto j_2$
give horns $(j_1,j_2,i_1)$ and $(j_1,j_2,i_2)$, oriented towards and away from $j_1$ with their tips in $i_1,i_2\in T_m$.

\item 
For all $j\notin T_m$, the sets 
\[
\begin{aligned}
&U_j^\to=\{i\in T_m:i\to j\in \Eo\cup E_0\},\\ 
&U_j^\lto=\{i\in T_m:i\lto j\in \Eo\cup E_0\}
\end{aligned}
\]
are strongly connected
by edges in $\Eo\cup E_0$. 
\end{enumerate}
 
To see this, we first claim  
that, for any given $T_m$ and $j_1,j_2\in[n]$, the probability that 
property (1) fails is at most, for all large $n$,  
\[
3(1-p^2)^{k^3-2}\le 4\exp(-p^2k^3)\le 4\exp(-k^2). 
\] 
This follows by a union bound: 
For a given $j_1,j_2$ there are at least $k^3-2$ vertices
$i\in T_m\setminus\{j_1,j_2\}$. If (1) fails then  
either (a)  
for all such $i$ at least one of the edges $j_1\lto i\to j_2$ is closed,
(b) for all such $i$ at least one of edges $j_1\to i\lto j_2$ is closed,
or (c) for all such $i$ at least one of edges $j_1\to i\to j_2$ is closed. 
For fixed $j_1,j_2$ any one of these events holds with 
probability at most $(1-p^2)^{k^3-2}$ by independence. 

By Lemma~\ref{oriented-ER-connectivity} above 
and standard Binomial tail bounds (e.g., Lemma 2.8 in \cite{GS}), for any given $T_m$
and $j\notin T_m$, the probability that any given 
$U_j^\to$ or $U_j^\lto$  
is not strongly connected
is at most, for all large $n$,  
\begin{multline*}
\prob{{\rm Bin}(k^3,p)\le pk^3/2}+\exp(-k^2/2)\\
\le 
\exp(-pk^3/7)+\exp(-k^2/2)\le 2\exp(-k^2/2). 
\end{multline*}
Hence, for all large $n$, all trees $T_m$ have properties (1) and (2) with probability at least
\[
1-\frac{n}{2k^6}[4n^2\exp(-k^2)+4n\exp(-k^2/2)]
\ge1-n^3\exp(-k^2/2)=1-o(1).
\]

\noindent{\bf Step 3.} Convert all open edges between vertices of $T_m$ to occupied. 
We claim that properties (1) and (2)
for $T_m$ imply that all other open edges (not between vertices in $T_m$) 
are eventually occupied. 

Indeed, using the horns provided by (1), the strong connectivity in (2) and Lemma~\ref{L_nuc}, all open edges with exactly one 
endpoint in $T_m$ are eventually occupied. As discussed 
below the proof of Lemma~\ref{L_nuc}, all other edges between $x,y\notin T_m$
are then occupied, using the edges $x\to i\to y\in\Eo\cup E_0$
for some $i\in T$, provided by (1).

\noindent{\bf Step 4.} A.a.s., some $T_m$ is saturated. 

We show that any given subtree $T_m$ is saturated with probability at least
$(2\sqrt{n})^{-1}$. Given this, recalling that any two subtrees
share at most 1 vertex, it follows that some $T_m$ is saturated with probability 
at least 
\[
1-(1-(2\sqrt{n})^{-1})^{n/(4k^{6})}\ge 1-\exp(-\sqrt n/(8k^{6}))=1-o(1). 
\]

Since the $T_m$ are of the same size, it suffices to consider the case $T_1$. 
Moreover, for notational convenience, let us assume that 
$T_1=[1,k^3]$ and that for all $j\le k^3$ the vertices in $[1,j]$ form
a subtree of $T_1$. 

\noindent{\bf Step 4a.} For all large $n$, with probability at least 
$n^{-1/2}/\log n$ 
all edges $1\lrto i\in[2,k]$,  are in $\Eo\cup E_0$ and hence 
$[1,k]$ is saturated. 

Indeed, for large enough $n$,  
all such edges are in $\Eo\cup E_0$ with probability at least 
\[
p^{2k}\ge(\log n)^{-k}
\ge n^{-1/2}/\log n. 
\]
By induction, all edges $1\lrto i$ become occupied,
and using these edges all other open edges can be occupied:
if $i\to j$ is open, then it becomes occupied due to the occupied edges 
$i\to 1\to j$. 

\noindent{\bf Step 4b.} A.a.s., for any $j_1,j_2\in[k+1,k^3]$ there are $i_1,i_2,i_3\in[1,k]$
such that 
all edges 
$j_1\lto i_1\to j_2$, $j_1\to i_2\lto j_2$ and 
$j_1\to i_3\to j_2$  
are in $\Eo\cup E_0$. 

These edges play a similar role as those in Step 2 above. \
Moreover, the existence of such edges is proved similarly. 
For fixed $j_1,j_2$, a requiste $i_1$, say, will 
fail to exist with probability $(1-p^2)^{k}$ by independence.
Therefore, noting that $1-x\le e^{-x}$ and $kp^2=C\log k$, 
such edges are not open with probability at most (recall $C>8$)  
\[
3k^{6}(1-p^2)^{k}\le 6k^{6-C}\ll1.
\]

Next, for $j\in[k+1,k^3]$, we consider sets 
$V_j^\to,V_j^\lto$ analogous to the sets $U_j^\to,U_j^\lto$\
considered in  Step 2 above. However, in the present setting
(where the subtree on $[1,k]$ is much smaller than $T_m$ of size $k^3$),  
strong connectivity no longer follows by 
a simple union bound. 

\noindent{\bf Step 4c.}
For $j\in [k+1, k^3]$, we claim that the sets  
\begin{equation*}
\begin{aligned}
&V_j^\to=\{i\in [1,k]: i\to j\in \Eo\cup E_0\}, \\ 
&V_j^\lto=\{i\in [1,k]: i\lto j\in \Eo\cup E_0\}
\end{aligned}
\end{equation*}
are a.a.s.\ 
strongly connected by edges in $\Eo\cup E_0$.

Let $F_j^\to$ (resp.~$F_j^\to$) be the event that 
$V_j^\lto$ (resp.~$V_j^\to$) 
is strongly connected by edges in $E_0\cup \Eo$.
Let 
$$B=\bigcap_{j\in [k+1, k^3]}\left(F_j^\lto\cap F_j^\to\right).$$
The crucial step is the following correlation inequality
\begin{equation}\label{super-thm-eq1}
\prob{B}\ge \prod_{j\in [k+1, k^3]}\prob{F_j^\lto}\prob{F_j^\to}.
\end{equation}
To prove (\ref{super-thm-eq1}), 
let $\cA$ be the 
set of all possible choices of $V_j^\lto$, $V_j^\to$, 
that is, the set that contains all ordered selections
of $2(k^3-k)$ subsets of $[1,k]$:
$$
\cA=\{(A_j^\lto, A_j^\to:j=k+1,\ldots k^3): A_j^\lto,A_j^\to\subset [1,k]\text{ for all }j\}.
$$ 
Observe that for any vector $(A_j^\lto,A_j^\to)_j$ of such  
(deterministic) subsets, the events $\{V_j^\lto=A_j^\lto\}$, $\{V_j^\to=A_j^\to\}$, $j\in[k+1,k^3]$, 
are independent. Therefore, with  indices $j$ and $j'$ running 
over $[k+1,k^3]$, 
\begin{equation*}
\begin{aligned}
\prob{B}&=
\sum_{(A_j^\lto,A_j^\to)\in \cA}
\prob {\cap_j (F_j^\lto\cap F_j^\to)\;\bigcap\; \cap_{j'}\{V_{j'}^\lto=A_{j'}^\lto,V_{j'}^\to=A_{j'}^\to\}}
\\
&=\sum_{(A_j^\lto,A_j^\to)\in \cA}\prob{\cap_j (F_j^\lto\cap F_j^\to)\mid 
\cap_{j'}\{V_{j'}^\lto=A_{j'}^\lto,V_{j'}^\to=A_{j'}^\to\}}\\
&\quad\quad\times
\prob{\cap_{j'}\{V_{j'}^\lto=A_{j'}^\lto,V_{j'}^\to=A_{j'}^\to\}}
\\
&\ge \sum_{(A_j^\lto,A_j^\to)\in \cA}
\prod_{j}
\prob{ F_j^\lto\mid \cap_{j'}\{V_{j'}^\lto=A_{j'}^\lto,V_{j'}^\to=A_{j'}^\to\}}\\
&\quad\quad\times
\prob{ F_j^\to\mid \cap_{j'}\{V_{j'}^\lto=A_{j'}^\lto,V_{j'}^\to=A_{j'}^\to\}}
\prob{\cap_{j'}\{V_{j'}^\lto=A_{j'}^\lto,V_{j'}^\to=A_{j'}^\to\}},\\
\end{aligned}
\end{equation*}
by the Fortuin--Kasteleyn--Ginibre inequality \cite{FKG}. Hence 
\begin{equation*}
\begin{aligned}
\prob{B}
&\ge  \sum_{(A_j^\lto,A_j^\to)\in \cA}\prod_{j}\prob{ F_j^\lto\mid V_{j}^\lto=A_{j}^\lto}
\prob{ F_j^\to\mid V_{j}^\to=A_{j}^\to}\\
&\qquad\quad\quad\times
\prod_{j'}\prob{V_{j'}^\lto=A_{j'}^\lto}\prob{V_{j'}^\to=A_{j'}^\to}\\
&= \sum_{(A_j^\lto,A_j^\to)\in \cA}\prod_{j}
\prob{ F_j^\lto\mid V_{j}^\lto=A_{j}^\lto}
\prob{V_{j}^\lto=A_{j}^\lto}\\
&\qquad\quad\quad\times
\prob{ F_j^\to\mid V_{j}^\to=A_{j}^\to}
\prob{V_{j}^\to=A_{j}^\to}\\
&= \sum_{(A_j^\lto,A_j^\to)\in \cA}\prod_{j}\prob{ F_j^\lto\cap\{ V_{j}^\lto=A_{j}^\lto\}}
\prob{ F_j^\to\cap\{V_{j}^\to=A_{j}^\to\}}
\\
&= \prod_j\left(\sum_{A_j^\lto\subset [1,k]}\prob{ F_j^\lto\cap\{ V_{j}^\lto=A_{j}^\lto\}}\right)
\left(\sum_{A_j^\to\subset [1,k]}\prob{ F_j^\to\cap\{ V_{j}^\to=A_{j}^\to\}}\right)
\\
&= \prod_j\prob{ F_j^\lto} \prob{ F_j^\to}.
\end{aligned}
\end{equation*}
Moreover, by Lemma~\ref{oriented-ER-connectivity} above 
and standard tail bounds (e.g., Lemma 2.8 in \cite{GS}),
for large $k$, 
$$
\prob{(F_j^\lto)^c}\le \prob{|V_j|\le pk/2}+ k^{1-C/2}
\le \exp(-pk/7)+ k^{1-C/2}\le 2k^{1-C/2},
$$
and a similar bounds holds for $F_j^\to$. 
It follows (by Bernoulli's inequality) that, for large $k$,  
$$
\prob{B}\ge \left(1-2k^{1-C/2}\right)^{2k^3}
\ge 1-4k^{4-C/2}=1-o(1),
$$
since $C>8$.

\noindent{\bf Step 4d.} For all large $n$, $T_1$ is saturated with 
probability at least $(2\sqrt{n})^{-1}$. 

Note that, for all large $n$, 
the claims in the previous three steps all hold with probability 
at least $(2\sqrt{n})^{-1}$. Hence it remains to show that 
they together imply that $T_1$ is saturated. 
However, this follows by a similar argument as 
was used in Step 3 above, but using Steps 4a--c instead of 
Step 2. 

Altogether, by Step 4, a.a.s.\ some subtree $T_m$ is saturated, and thus by 
Steps 2 and 3, a.a.s.\ $V$ is saturated. 

\noindent{\bf Step 5.}
Finally, we extend our results from the case $\po=p$ to larger $\po$.  
This follows by the simple observation that, for all large $n$, 
\[
\prob{\text{$\Eo$ is not abundant}}\le 
2n^2(1-p^2)^{n-2}\le 3n^2e^{-p^2n}\le 3n^2e^{-n/\log n}\ll1.
\]
Therefore, for $\po=p$, a.a.s.\ $E_\infty$ is abundant since we have shown
that a.a.s.\ $V$ is saturated (i.e., $\Eo\subset E_\infty$). 
Hence, 
by Lemma~\ref{ab-sat}, a.a.s. $E_\infty$ is abundant for $\po\ge p$, 
and so also, a.a.s.\ 
$V$ is saturated for $\po\ge p$.
\end{proof}

\subsection{$R$-unoriented initial graphs}

We can relax the assumption that $G_0$ is unoriented, 
but we emphasize that strong connectivity of $G_0$ is not enough for 
Theorem~\ref{supercr-thm} to hold in the same form (see the discussion on Open Problem~\ref{con-oriented}). 
We only 
provide the following mild generalization, whose 
proof is omitted as it is a minor adaptation of the proof
of Theorem~\ref{supercr-thm}. 
Informally, we start with an unoriented tree $T$ and replace 
every vertex of $T$ with a graph of bounded size that
is strongly connected, so that between $T$-neighboring sets we have edges in both directions.
To be more precise, for an integer $R\ge 1$, 
we say that $G_0$ is {\it $R$-unoriented\/}  if there exists an unoriented  
tree $T$ on a vertex set $V'$, together with 
a map $\phi:V\to V'$, such that: 
(1) $|\phi^{-1}(y)|\le R$ and $\phi^{-1}(y)$ is strongly connected 
for all $y\in V'$; and (2) if $y_1,y_2$ are neighbors in 
$T$, then there are $x_1\in \phi^{-1}(y_1)$ and 
$x_2\in \phi^{-1}(y_2)$, such that $x_1\to x_2\in E_0$.

Note that $1$-unoriented graphs are exactly those with an unoriented 
spanning tree. 
For an example with $R=2$, take $V =[2n]$ and assume
$1\une2,3\une4,5\une6, \ldots, (2n-1)\une(2n)$ are strongly connected pairs,
and add connections $1\to3$, $2\lto4$, $3\to5$, $4\lto6$, etc. Here $T$ is 
a linear graph on $[n]$. 

\begin{theorem} \label{supercr-thm-R} 
If $G_0$ is an $R$-unoriented  connected graph, 
we have that 
$\po\ge C\log\log n/\sqrt{\log n}$,
and $C\ge C_0(R)$, then 
$E_\infty$ is a.a.s.~saturated.
\end{theorem}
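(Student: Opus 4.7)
The plan is to adapt the proof of Theorem~\ref{supercr-thm} by working with the quotient tree $T$ and treating each fiber $\phi^{-1}(y)$, $y\in V'$, as a ``block'' of at most $R$ vertices. Since $R$ is a constant, every quantitative estimate in the original proof carries over up to constants depending only on $R$, which are absorbed into the threshold $C_0(R)$. I would fix $p$ and $k$ essentially as in Step~1, apply Lemma~\ref{tree-cover} to the unoriented tree $T$ to obtain subtrees $T_m\subset V'$ with $k^3$ edges pairwise sharing at most one vertex, and set $\hat T_m=\phi^{-1}(T_m)\subset V$, so that $k^3\le |\hat T_m|\le Rk^3$. A key structural observation is that between $T$-adjacent fibers the $R$-unoriented condition, applied to both orderings, yields $E_0$-edges in both directions, and combining these with strong $E_0$-connectivity inside each fiber produces oriented $E_0$-paths between any two vertices of $\hat T_m$ in both directions.

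Next I would verify analogues of properties (1)--(3) of Step~2 for every $\hat T_m$: since $|\hat T_m|\ge k^3$, the probability estimates for horns and for strong connectivity of the sets $U_j^\to, U_j^\lto, W_j^\to, W_j^\lto$ (reinterpreted with $T_m$ replaced by $\hat T_m$ and $T$-paths routed through blocks) carry over up to $R$-dependent constants. The propagation argument of Step~3 then works by induction on the $T$-distance from $y=\phi(j)\notin T_m$ to $T_m$: the single $E_0$-edge along a $T$-path in the original proof is replaced by one inter-fiber $E_0$-edge plus at most $R-1$ intra-fiber $E_0$-edges (from strong connectivity), all of which are initially occupied, and this change is absorbed into the constants.

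The main obstacle is the analogue of Step~4: showing that a given $\hat T_m$ is internally saturated with probability at least $(2\sqrt n)^{-1}$. I would adapt the seed argument (Step~4a) by fixing a sub-subtree $S\subset T_m$ of size $\lceil k/R\rceil$, choosing a distinguished vertex $v\in\phi^{-1}(S)$, and demanding that $v\lrto w\in \Eo\cup E_0$ for every $w\in\phi^{-1}(S)\setminus\{v\}$; since $|\phi^{-1}(S)|\le k$, this event has probability at least $p^{2k}\ge n^{-1/2}$ as before. A BFS of $\phi^{-1}(S)$ along its $E_0$-skeleton (which exists by strong connectivity within fibers and inter-fiber $E_0$-edges) then yields an ordering along which open edges from $v$ are successively occupied by transitivity, after which $\phi^{-1}(S)$ is saturated. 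Steps~4b--4d transfer with $[1,k]$ replaced by $\phi^{-1}(S)$ and $[k+1,k^3]$ by $\hat T_m\setminus\phi^{-1}(S)$, and the FKG-based correlation inequality~(\ref{super-thm-eq1}) carries over since the relevant events still depend on disjoint, increasing subsets of open edges. Finally, Step~5 and Lemma~\ref{ab-sat} extend the conclusion to general $\po\ge C\log\log n/\sqrt{\log n}$ with $C\ge C_0(R)$ chosen large enough to dominate the $R$-dependent constants in Steps~2 and 4.
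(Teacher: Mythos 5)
Your plan matches the approach the paper has in mind (the proof is omitted in the paper precisely because it is this kind of minor adaptation of Theorem~\ref{supercr-thm}): pass to the quotient tree $T$, treat each fiber $\phi^{-1}(y)$ as a block of size at most $R$, and note that applying condition~(2) of the $R$-unoriented definition to both orderings of an unoriented $T$-edge, together with strong $E_0$-connectivity inside fibers, yields oriented $E_0$-paths in both directions between any two vertices of $\hat T_m=\phi^{-1}(T_m)$. The horn/connectivity estimates of Step~2, the FKG computation of Step~4c, and the Step~5 abundance argument all transfer with $R$-dependent constants absorbed into $C_0(R)$.

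Two points need more care than ``absorbed into constants.'' First, two sets $\hat T_m$, $\hat T_{m'}$ can share up to $R$ vertices (the fiber over a common $T$-vertex), hence up to $R(R-1)$ oriented edges, so the events $\{\hat T_m\text{ is internally saturated}\}$ in Step~4 are no longer determined by disjoint edge sets and are not independent as written; one must, for example, condition on the $O(R^2)$ shared edges or thin the collection of subtrees so that the relevant edge sets are disjoint, before running the $1-(1-q)^{N}$ computation. Second, in your Step~4a the $E_0$-graph on $\phi^{-1}(S)$ is strongly connected but not symmetric, so there need not exist a single BFS ordering in which every new vertex has \emph{both} an $E_0$-in-neighbour and an $E_0$-out-neighbour among its predecessors (a directed $3$-cycle already defeats this). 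One should instead run two separate inductions, along $E_0$-BFS distances from $v$ (to occupy the edges $v\to w$) and along $E_0$-BFS distances to $v$ (to occupy $w\to v$), and only then use the star at $v$ to saturate $\phi^{-1}(S)$. The same separation into in- and out-inductions is needed in the Step~3 propagation, where in the unoriented case the tree-parent of $j$ served simultaneously as $E_0$-in- and out-neighbour. With these repairs the argument goes through.
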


\section{Open problems}\label{sec-open}

For clarity, each unresolved issue is presented in what we view as the simplest 
context, although most can be studied in much greater 
generality. We begin with a conjecture about a sharp transition 
in Catalan percolation.

\begin{conjecture}\label{con-cat}
There exists a critical probability 
$p_c^{\text{\rm Cat}}\in (0,1)$
so that for $p < p_c^{\rm Cat}$ (resp.\ $p>p_c^{\rm Cat}$) 
there exists a constant $C=C(p)$ so that a.a.s.\ 
$E_\infty$ in the Catalan percolation process contains no edge 
(resp.\ contains all open edges)
of length at least $C\log n$.
\end{conjecture}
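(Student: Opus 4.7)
The plan is to reformulate the conjecture as a sharp-threshold statement for a single monotone event, and then attack both directions using modern sharpness-of-percolation techniques.

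For $\ell\ge 1$ let $q_\ell(p):=\prob{1\to 1+\ell\in E_\infty}$ in Catalan percolation on $L_{\ell+1}^\to$ with open-edge density $p$. By the standard coupling $q_\ell$ is non-decreasing and continuous in $p$; by Lemma~\ref{catalan-sub}, $q_\ell(p)\to 0$ exponentially for $p<1/4$, and by Lemma~\ref{catalan-sup}, $p-q_\ell(p)\to 0$ exponentially for $p>p_u$. I would therefore define
\[
p_c^{\rm Cat}:=\sup\{p\in[0,1]:q_\ell(p)\to 0\text{ as }\ell\to\infty\}\in[1/4,\,p_u]\subset(0,1),
\]
which already addresses the existence claim of the conjecture. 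The remaining content reduces to: (i) $q_\ell\to 0$ exponentially for $p<p_c^{\rm Cat}$, and (ii) $p-q_\ell\to 0$ exponentially for $p>p_c^{\rm Cat}$. A union bound over the $\cO(n^2)$ edges of length $\ge C\log n$ then recovers both stated regimes.

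For the subcritical side, the recursion $q_\ell\le p\sum_{k=1}^{\ell-1}q_k q_{\ell-k}$ (union bound over split points) together with the Catalan-type generating-function argument of Lemma~\ref{catalan-sub} already delivers exponential decay throughout $p<1/4$. To extend to the full regime $p<p_c^{\rm Cat}$, I would run a block renormalization: show that if $q_m(p)<\varepsilon$ for some scale $m=m(p)$ and a small enough $\varepsilon=\varepsilon(p)$, then coarse-graining $[1,\ell]$ into blocks of length $m$ and exploiting the independence of open-edge indicators on disjoint intervals produces a subcritical Galton--Watson process on parse trees, driving $q_\ell$ to zero geometrically in the number of blocks. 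An alternative route is a Menshikov/Duminil-Copin--Tassion differential inequality for the susceptibility $\chi(p):=\sum_\ell q_\ell(p)$.

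For the supercritical side, the central task is to upgrade the defining property $\liminf q_\ell>0$ to $p-q_\ell\to 0$ exponentially. I would apply the OSSS inequality to the monotone event $\{1\to 1+\ell\in E_\infty\}$, viewed as a Boolean function of the $\cO(\ell^2)$ iid Bernoulli$(p)$ open-edge bits. A decision tree determining this event with maximal revealment $\delta=\delta_\ell=o(1)$ yields
\[
\frac{d}{dp}q_\ell\ge\frac{q_\ell(1-q_\ell)}{\delta_\ell},
\]
which integrates, in the framework of Duminil-Copin--Raoufi--Tassion, to a sharp threshold and --- after iterating with a finite-size criterion --- to exponential tail control on either side of $p_c^{\rm Cat}$.

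The principal obstacle I anticipate is the construction of a decision tree of small revealment for this non-local, recursively defined event. A naive depth-first exploration of parse trees inevitably queries short (near-diagonal) edges many times, producing $\delta$ bounded away from zero. Any successful scheme must exploit the $\Theta(\ell)$ alternative splits available at each internal node of a parse tree, presumably through a randomized exploration that spreads queries roughly uniformly over the exponentially large space of parse trees. Making this quantitative --- in particular, controlling the highly nonuniform influence of edges of very different lengths --- is, to my mind, the main hurdle standing between the above program and a full proof.
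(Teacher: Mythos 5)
This statement is \emph{Conjecture}~\ref{con-cat} in the paper: the authors do not prove it, they pose it as an open problem. There is therefore no proof in the paper to compare against, and what you have written is, by your own closing admission, a research plan rather than a proof. That said, let me assess whether the plan, if fully carried out, would in fact establish the conjecture, and where the real gaps lie.

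The most serious logical issue is at the reduction step. You define $p_c^{\rm Cat}=\sup\{p:q_\ell(p)\to 0\}$ and then assert that ``the remaining content reduces to'' (i) $q_\ell\to 0$ exponentially for $p<p_c^{\rm Cat}$, and (ii) $p-q_\ell\to 0$ exponentially for $p>p_c^{\rm Cat}$. But the definition of $p_c^{\rm Cat}$ gives you nothing on the supercritical side beyond $\limsup_\ell q_\ell(p)>0$; it does not rule out an intermediate regime in which $q_\ell$ is bounded away from both $0$ and $p$, let alone give exponential decay of $p-q_\ell$. Even on the subcritical side, $q_\ell\to 0$ does not automatically give exponential (or even polynomial) decay, which is what the union bound over $\Theta(n^2)$ edges requires. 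The conjecture is precisely the assertion that the ``$q_\ell\to 0$'' threshold, the ``exponential decay'' threshold, and the ``exponential approach to $p$'' threshold all coincide; reformulating this as (i)+(ii) does not make it smaller — it is the whole conjecture. You should also be slightly careful that $q_\ell$ need not be monotone in $\ell$, so even the tidiness of the definition of $p_c^{\rm Cat}$ requires a word.

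On the subcritical mechanism: the recursion $q_\ell\le p\sum_{k=1}^{\ell-1}q_kq_{\ell-k}$ is valid here precisely because (with $\pl=0$) occupation of an edge $i\to j$ depends only on the bits in $[i,j]$, so the two halves are independent, and this does recover Lemma~\ref{catalan-sub} for $p<1/4$. But the proposed renormalization beyond $1/4$ is hand-waving at the crucial point: parse trees for a long edge necessarily use edges comparable in length to the edge itself near the root, so it is not clear how to coarse-grain into $m$-blocks in a way that produces a genuinely subcritical branching structure from ``$q_m<\varepsilon$'' alone. A Menshikov/Duminil-Copin--Tassion style argument would need a suitable notion of ``exploration'' of a parse tree and a differential inequality for $\chi(p)$ — none of which is straightforward for this non-local, recursively defined event.

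On the supercritical mechanism: you correctly isolate the obstruction — a low-revealment decision tree. But there is an additional structural wrinkle you should flag: $q_\ell(p)\le p<1$, so the OSSS inequality in the form $\tfrac{d}{dp}q_\ell\ge q_\ell(1-q_\ell)/\delta_\ell$ cannot push $q_\ell$ close to $1$; you would have to work with the conditional quantity $q_\ell/p$ (or exclude the bit for $1\to 1+\ell$ from the Boolean function) so that the target value is $1$ rather than $p$. More importantly, the DCRT machinery turns the OSSS inequality into exponential decay only via a delicate iteration over scales with a matching exploration algorithm; for Catalan percolation no such algorithm is known, and you say so yourself. So the honest summary is: your program identifies plausible tools, names the right obstruction, and reproduces the endpoints $[1/4,p_u]$ already supplied by Lemmas~\ref{catalan-sub} and \ref{catalan-sup}, but it does not close any of the gaps that make this a conjecture rather than a theorem.
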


On the other hand, in the case of $G_0=L_n$, when both $\pr>0$ and $\pl>0$, the interaction
between leftward and rightward edges is a challenge.   

\begin{openproblem}
In the setting of Theorem~\ref{tc2}, is it true that 
when $\pl<c\frac 1{\sqrt{\log n}}$ and 
$\pr<a$, a.a.s.\ $E_\infty$ contains no 
edges longer than $\alpha\log n$?
\end{openproblem}

For the statements of our remaining 
open problems, we define 
$$
p_c=\inf\{p:\prob{\text{$V$ is saturated}}\ge 1/2\text{ for all }\po\ge p\}.
$$
Perhaps the most pressing remaining question 
is the correct power of  $\log\log n$ for the 
transition in Theorem~\ref{tc1}. We suspect 
neither bound in that theorem is sharp, as the existence of a giant component,  
rather than connectivity of edge endpoints (as used in the 
proof of Theorem~\ref{supercr-thm}) should suffice. 
We assume the unoriented setting in our next four open problems 
(i.e., 
that $G_0$ and $\Go$ are both unoriented) and that $\Go$ is 
the \ER graph with 
probability  $\po$ of open edges. 

\begin{conjecture}
\label{con-loglog} Assume that $G_0$ is the linear 
graph on $[n]$. 
Then we have that 
$p_c=\Theta(\sqrt{\log\log n/\log n})$.  
\end{conjecture}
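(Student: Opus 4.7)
The plan is to establish matching bounds on $p_c$ of order $\sqrt{\log\log n/\log n}$ for $G_0=L_n$. The two directions are essentially independent and I would attempt them separately; the upper bound looks like a direct adaptation of existing methods, while the lower bound appears to require a genuinely new combinatorial input.

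\textbf{Upper bound.} Following the hint in the paragraph preceding the conjecture, I would rerun the proof of Theorem~\ref{supercr-thm} with subtrees $T_m$ of size $k$ of order $\log n/\log\log n$ and $\po$ of order $\sqrt{\log\log n/\log n}$, chosen so that $\po^2 k$ is a large constant rather than $\Theta(\log k)$. This places the auxiliary random graphs on $U_j^\to, U_j^\lto, W_j^\to, W_j^\lto$ firmly in the giant-component regime but strictly below the connectivity threshold used in Step~2 of that proof. Step~3 would be reorganized into two passes: first occupy the horn edges for $j$ and then the open edges from the giant component of $W_j^\to$ to $j$ by directed traversal within the giant; second, handle the stragglers $i'\to j$ with $i'$ outside the giant via the transitive step $i'\to i\to j$, where $i$ is any giant-component vertex with $i'\to i$ open. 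The main obstacle here is making this second pass succeed uniformly over all $j\in[n]$, since the union bound demands each straggler be absorbed with probability $1-o(1/n)$, which is much stronger than the constant-probability guarantee that a bare giant-component bound gives. I would plan to patch this by stacking a small number of absorption rounds, iterating an expanding-saturated-set construction and coupling across distant indices $j$ by a second moment argument.

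\textbf{Lower bound.} The estimate~\eqref{E_horn} inside the proof of Theorem~\ref{tg-subcr} bounds the probability that a vertex is part of a horn in $K$ by $4D^2\po+2D|K|\po^2$. At $\po$ of order $\sqrt{\log\log n/\log n}$ and $|K|=\alpha\log n$, the second term is of order $\log\log n$, and raising it to the power $|K|/(26D^2)$ overwhelms the entropy factor, so the current proof breaks. Saving the missing $\sqrt{\log\log n}$ requires replacing the loose collection of edge-disjoint horns by a tighter witness of occupation. My plan is to track a recursive witness tree grown from $e$ in the slowed-down dynamics of Lemma~\ref{AL-property}: each internal node is an occupied edge, its two children are its parents in that occupation event, and its leaves are edges of $E_0$. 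Such a tree has $|I_e|$ internal edges drawn from $\Eo$ with independent open-edge status, and the hope is that the number of trees rooted at a fixed edge of length $\ell$ with all leaf positions consistent with $L_n$ is only $e^{O(\ell)}$, rather than the $\ell^{\Theta(\ell)}$ suggested by free placement of horn tips.

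The hard part, and what I view as the main obstacle to the conjecture, is precisely this enumeration. Unlike the horn-based counting, it must respect the recursive tree structure and the linear embedding in $L_n$ simultaneously, and in the presence of both leftward and rightward open edges it must be weighted against the joint law of $\Eo^\ell$ and $\Eo^r$, presumably via the edge-trading viewpoint of Lemma~\ref{trading} or a strengthening of it. Even in the oriented case this enumeration produces the Catalan numbers visible in Lemma~\ref{catalan-sub}, and I expect the unoriented case to require genuinely new input beyond what the paper already develops; for this reason I would attack the upper bound first, where the giant-component substitution is a well-defined program, and treat the lower bound as the more speculative half of the project.
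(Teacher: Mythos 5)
The statement you were asked to prove is Conjecture~\ref{con-loglog}, which the paper explicitly leaves \emph{open}: there is no proof of it anywhere in the paper, so there is nothing to compare against. Your submission is a research plan rather than a proof, and you acknowledge this yourself; I will therefore assess the plan on its own terms.

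Your upper-bound strategy --- rerunning the proof of Theorem~\ref{supercr-thm} with $\po^2 k$ a large constant rather than $\Theta(\log k)$, so that $\po\asymp\sqrt{\log\log n/\log n}$, and replacing the strong-connectivity requirement on the auxiliary vertex sets by a giant-component requirement --- is exactly the improvement the authors gesture at in the paragraph preceding the conjecture (``the existence of a giant component, rather than connectivity of edge endpoints, \ldots should suffice''). You have also correctly located the reason this does not close: a giant component of constant density leaves a constant fraction of the $\sim\po k$ endpoints of open edges to each vertex $j$ in small components, and absorbing all of them must succeed with probability $1-o(1/n)$ uniformly over $j$, which a bare giant-component bound does not provide. (As a small correction, in the paper's proof the binding constraint is the connectivity of the sets $V_j^\to,V_j^\lto\subset[1,k]$ in Step~4c, not the $U_j,W_j$ of Step~2, which live on the larger set $T_m$ of size $k^3$ and remain comfortably supercritical; but the diagnosis is the same.) The proposed fix of ``stacking absorption rounds'' is not a worked argument, and it is exactly where any genuine proof of the upper bound would have to begin.

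On the lower bound, you are right that \eqref{E_horn} gives $2D|K|\po^2\asymp\log\log n\gg1$ when $\po\asymp\sqrt{\log\log n/\log n}$ and $|K|\asymp\log n$, so the bound in Theorem~\ref{tg-subcr} is vacuous at that density, and that replacing the edge-disjoint-horn count by a recursive witness tree for the occupation event is the natural sharpening. The gap is the enumeration you yourself flag as ``the hard part'': once leftward open edges are present, the midpoint $v$ of a transitive step $i\to v\to j$ can lie outside $[i,j]$, so it is not at all clear that witness trees for a length-$\ell$ edge number only $e^{O(\ell)}$ rather than $\ell^{\Theta(\ell)}$, and the Catalan count of Lemma~\ref{catalan-sub} offers no guidance on this. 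This is precisely the leftward/rightward interaction that the paper isolates as the central difficulty (compare the open problem immediately following Conjecture~\ref{con-cat} and the machinery of Lemma~\ref{trading}, which only handles the regime $\pl\le n^{-c_1}$). In short: your plan is a sensible and honest roadmap that correctly identifies the obstacles, but neither direction contains the new idea needed to turn it into a proof, and the conjecture remains open.
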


Graphs of bounded diameter are, in a way, at the opposite 
extreme from graphs of bounded degree. As in the case of 
bootstrap percolation, the scaling of 
the critical probability should change dramatically. 

\begin{conjecture}
\label{con-hamming} Assume that $V=[n]^d$ and that $G_0$ is
the Cartesian product of $d$ complete graphs on $[n]$, i.e., 
the $d$-dimensional Hamming graph. For $d\ge 3$, there 
exists a power $\gamma=\gamma(d)\in (0,\infty)$ so that, for every 
$\epsilon>0$ and large enough $n$,  $p_c$ is between 
$n^{-\gamma-\epsilon}$ and $n^{-\gamma+\epsilon}$.
\end{conjecture}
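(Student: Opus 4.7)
I would attempt Conjecture~\ref{con-hamming} by inducting on Hamming distance. Since every fiber of $H(d,n)$ is already a $K_n$ in $G_0$, every open edge of Hamming distance 1 or 2 is automatically occupied (a distance-2 edge $u\to v$ is covered by $u\to w\to v$ for $w$ a common Hamming neighbor). Saturation thus reduces to occupation of open edges of distance $k\in\{3,\ldots,d\}$. Let $p_k=p_k(n,d)$ be the smallest $p$ at which all distance-$k$ open edges are a.a.s.\ occupied; the target is $p_k=n^{-\gamma_k(d)+o(1)}$ with $\gamma_k$ increasing in $k$, yielding $p_c=p_d=n^{-\gamma(d)+o(1)}$ for $\gamma(d):=\gamma_d(d)$.

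For the \emph{upper bound}, I would proceed by induction on $k$ using a ``balanced witness'' strategy. Assume all distance-$(k-1)$ edges are a.a.s.\ occupied at density $p=n^{-\gamma_{k-1}+\epsilon}$. For an open edge $u\to v$ at distance $k$, a direct enumeration (parameterising witnesses $w$ by the pair $(S,T)$ of coordinate sets where $w$ disagrees with $u$ and $v$) yields roughly $k(k-1)(n-2)^{k-2}$ witnesses with both $u\to w$ and $w\to v$ at Hamming distance $k-1$; each such pair of edges is open with probability $p^2$ and, by the inductive hypothesis, open distance-$(k-1)$ edges are occupied a.a.s. Chernoff bounds plus a union bound over the $n^{2d}$ candidate edges give a.a.s.\ occupation once $p^2 n^{k-2}\gg \log n$, suggesting $\gamma_k(d)=(k-2)/2$ and so $\gamma(d)=(d-2)/2>0$ for $d\ge 3$. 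For the \emph{lower bound}, I would adapt the Aizenman--Lebowitz framework of Section~\ref{sec-general-subcr}, but Lemma~\ref{L_BK} fails directly because $H(d,n)$ has degree $d(n-1)$. The remedy is to collapse each fiber to a single point, reducing the relevant counting problem to a ``quotient'' graph of bounded degree $d$, on which a bounded-degree version of the lemma applies. Using an analogue of Lemma~\ref{AL-property}, occupation of a distance-$k$ edge would force, at each intermediate scale $s\in\{1,\ldots,k-2\}$, an internally saturated sub-Hamming structure of size $\Theta(n^s)$, whose existence has probability at most $n^{O(s)}p^{\Omega(s)}$; optimising over $s$ then gives the matching lower bound.

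The principal obstacle is establishing that the upper and lower bounds are sharp to the \emph{same} exponent. The balanced-witness heuristic giving $\gamma(d)=(d-2)/2$ could in principle be defeated by an unbalanced cascade, e.g.\ a chain of moderately long occupations that propagates faster than pairwise witness counts predict, analogously to nucleation effects in bootstrap percolation~\cite{BBDM}. Ruling such cascades out requires an optimisation over the ``shape'' of intermediate Aizenman--Lebowitz structures in $[n]^d$, and seems to demand an analogue of the edge-trading Lemma~\ref{trading} adapted to the multi-directional Hamming setting so that different orientation classes can be decoupled. Even establishing the existence of a single limiting $\gamma(d)$ (rather than distinct $\liminf$ and $\limsup$ exponents) appears nontrivial and would likely require a subadditivity argument applied to $\log p_c(n,d)$. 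Techniques from bootstrap percolation on $[n]^d$ and from clique-completion~\cite{BBM} should be useful, but they must be substantially reworked because transitive closure is an inherently long-range rule whereas these classical models spread locally.
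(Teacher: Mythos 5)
The statement you are addressing is a conjecture in the paper (Conjecture~\ref{con-hamming}, stated in the open-problems section), and the paper supplies no proof; it only remarks that the scaling should differ dramatically from the bounded-degree case and that the threshold is suspected not to be sharp in the Friedgut--Kalai sense. So there is no argument in the paper to compare yours against, and I can only evaluate your sketch on its own terms.

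Your upper-bound heuristic contains a logical slip that inverts the predicted exponent. You correctly note that, granting occupation of all distance-$(k-1)$ open edges, the number of useful witnesses $w$ for a distance-$k$ edge $u\to v$ is of order $n^{k-2}$, so the individual-edge condition reads $p^2 n^{k-2}\gg\log n$, i.e.\ $p_k = n^{-(k-2)/2+o(1)}$. But saturation requires occupying open edges at \emph{every} distance $k\in\{3,\ldots,d\}$, so the binding requirement is $p_c = \max_k p_k$. By your own formula $p_k$ \emph{decreases} in $k$ (more witnesses for longer edges), so the bottleneck is $k=3$ and the heuristic predicts $\gamma(d)=1/2$ for all $d\ge3$, not $\gamma(d)=(d-2)/2$ as you wrote when you identified $p_c$ with $p_d$, the smallest of the thresholds. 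Whether the truth really is $\gamma=1/2$ independent of $d$, or whether some other mechanism (e.g.\ cascades using edges of nearby length as parents, rather than strictly shorter ones) gives a $d$-dependent exponent, is exactly what makes this problem open. Your lower-bound direction --- collapsing fibers to a bounded-degree quotient, adapting Lemma~\ref{L_BK} and Lemma~\ref{AL-property}, and finding a surrogate for Lemma~\ref{trading} --- is a reasonable sketch, but it is far from an argument: the quotient graph destroys the randomness structure on which the horn-counting bound in \eqref{E_horn} relies, and you do not explain how to control the unbounded number of $\Eo$-edges incident to a single vertex of $H(d,n)$. You are right that the dependence between witnesses, the possibility of unbalanced cascades, and even the existence of a single limiting exponent are genuine obstacles; nothing in the paper resolves them.
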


Observe that the above conjecture does not hold for 
$d=2$ (or for any other $G_0$ with diameter $2$), 
in which case all open edges get occupied at time 
$1$, regardless of $\po$. 
We suspect that  in the
setting of Conjecture~\ref{con-hamming} the threshold 
$p_c$ is not sharp, in the sense of \cite{FK}. 
Indeed, computer simulations
suggest that saturation fails close to criticality due to rare
open edges with a protective arrangement of nearby closed edges, and that
the number of such protective local configurations approaches a Poisson
distribution with a parameter that depends continuously on the constant
$a$ if $p=a p_c$. By contrast, we conjecture that $p_c$ is sharp in
Conjecture~\ref{con-loglog}. The 
methods of \cite{FK} (or subsequent work) do not apply in any of these cases, 
as our random objects (edges) 
do not play symmetric roles.  

Perhaps the most interesting intermediate case is the 
hypercube, for which we have no guess about the 
size of $p_c$.

\begin{openproblem}
\label{con-hypercube} Assume 
that $G_0$ is the 
hypercube on $\{0,1\}^n$. What is the asymptotic behavior of
$p_c$?
\end{openproblem}

Another natural graph with unbounded degree is the random graph. 

\begin{openproblem}
Assume  $G_0$ is an \ER graph with edge probability $p_{\rm initial}$.
Estimate the probability of saturation, 
in terms of $p_{\rm initial}$ and $\po$.
\end{openproblem}

More complex
edge addition dynamics can be considered in polluted environments.
Following the lead of \cite{Bol, BBM}, we consider {\it $K_d$-percolation\/}, whereby we iteratively complete
all copies of $K_d$ missing a single edge, where $K_d$ is
the complete graph on $d$ points. We assume that 
$G_0$ the graph on $[n]$ 
with edges $i\une j$, for all $|i-j|\le d-2$. Note that this is the simplest 
initialization that results in saturation when $\po=1$.  Simulations suggest (see
left panel 
of Fig.~\ref{fig-op})  that nucleation occurs 
for all $d\ge 3$. The unpolluted ($\po=1$) version 
of this process
is analyzed in, e.g., \cite{BBM,BBMR12,BPRS17,GKP17,AK, Kol, BK20}.

\begin{conjecture}
\label{con-K4} Consider the $K_d$-percolation dynamics, with  
$G_0$ as above. Then there exists 
some power $\gamma=\gamma(d)>0$ so that we have that 
$p_c=\Theta[(\log\log n)^{\gamma}(\log n)^{-1/(d-1)}]$.
\end{conjecture}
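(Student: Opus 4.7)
The plan is to transport the framework of Theorems~\ref{tc1} and \ref{supercr-thm} to the $K_d$-completion rule. The exponent $1/(d-1)$ should emerge because the cheapest way to add one long edge to the dynamics consumes $d-2$ auxiliary edges linking a fresh vertex to the existing structure; when those edges are forced to be long they must come from $\Eo$, so each elementary growth event costs a factor of order $p^{d-1}$, and balancing $\log n$-many such steps gives $p \sim (\log n)^{-1/(d-1)}$.

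For the lower bound, I would reprove analogues of Lemmas~\ref{connected-necessary}--\ref{AL-property} in this setting: define $\cI_e$ as the collection of vertex sets whose induced subgraphs in $G_0$ and $\Go$ suffice to make $e$ occupied, and fix a minimal witness $I_e$. The horn lemma is replaced by the claim that each $v \in I_e$ participates in a local near-$K_d$ configuration --- a choice of $d-1$ companions whose spanned $K_d$ has all edges except one in $E_0 \cup \Eo$ --- an event with probability at most $C_d p^{d-1}$, once one accounts carefully for which edges are forced to be long (hence in $\Eo$) as opposed to short (hence possibly in $E_0$, which has diameter $d-2$). The Aizenman-Lebowitz slowed-down argument carries over verbatim, since witness cardinalities still at most double per step. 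Combining a packing step like Lemma~\ref{L_BK} with BK and the usual count of $\widetilde G_0$-connected subgraphs yields, at every scale $\ell \in [1,\alpha\log n]$,
\[
\P(\text{some edge of length }\ge \ell\text{ is occupied}) \le n\ell(C_d)^{\ell}(C_d p^{d-1})^{\ell/c_d},
\]
which is $o(1)$ whenever $p \le c(\log\log n)^{\gamma}(\log n)^{-1/(d-1)}$ for an appropriate $\gamma = \gamma(d)$.

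For the upper bound, I would adapt the nucleation strategy of Theorem~\ref{supercr-thm}. Pick $k$ so that $kp^{d-1} \approx C\log k$ (the $K_d$-analogue of Step~4b in that proof), partition a spanning subtree of $G_0$ into pieces of size polynomial in $k$, and show two things: (i) any fixed piece becomes internally saturated with probability at least $n^{-1/2}$, so that a.a.s.\ at least one of $\Theta(n/k^{O(1)})$ near-disjoint pieces nucleates; and (ii) once a piece is saturated it spreads to the rest of $V$, by verifying inductively that every external vertex $v$ admits enough $K_d$-completion horns with tips in the already-saturated region and that the auxiliary open-edge structures needed to trigger those completions have the requisite strong connectivity --- the $K_d$-analogue of properties (1)--(3) in Step~2 of the proof of Theorem~\ref{supercr-thm}. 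The seed in (i) would be a small, fully-open $K_d$-cluster near a designated root, propagated outward one vertex at a time with cost $p^{d-1}$ per step.

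The main obstacle will be matching the $(\log\log n)^{\gamma}$ prefactor in both directions. The subcritical counting naturally produces one exponent $\gamma_1$ (from the logarithmic slack between $p$ and $(\log n)^{-1/(d-1)}$ needed to beat the $(12D^2)^\ell$-style entropy of connected witness sets), while the nucleation computation produces a different exponent $\gamma_2$ coming from the probability cost of forming the saturated seed; proving $\gamma_1 = \gamma_2$ will require either a sharper entropy accounting on the Aizenman-Lebowitz side (to replace the crude enumeration of witnesses) or a more delicate nucleation estimate that leverages the near-independence of the $\binom{d}{2}$ edges in each $K_d$-completion event. Closing this gap to the level conjectured above appears to require a genuine new idea beyond what the unpolluted analyses of \cite{BBM, AK, Kol} furnish, which is why we state the result as a conjecture rather than attempt a proof here.
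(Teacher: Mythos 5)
This statement is a conjecture in the paper, not a theorem: the authors explicitly offer no proof, so there is no argument of theirs for you to reproduce or deviate from. Your proposal correctly reads the situation as such and ends by declining to claim a proof; that is the right conclusion. As a plan of attack your sketch is a sensible transport of the machinery from Theorems~\ref{tc1} and \ref{supercr-thm}, and the balance $kp^{d-1}\approx C\log k$ with $k\approx \log n/\log\log n$ is exactly the $K_d$-analogue of Step~4b, which does produce the conjectured scale $(\log\log n)^{\gamma}(\log n)^{-1/(d-1)}$.

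Two cautions worth flagging in your reasoning. First, the slogan ``each elementary growth event costs $p^{d-1}$, and balancing $\log n$ steps gives $p\sim(\log n)^{-1/(d-1)}$'' is not the balance that actually produces this exponent; multiplying a per-step cost over $\log n$ steps is always tiny. The exponent comes from the seed-size/entropy balance $kp^{d-1}\sim\log k$, as you later write, so the introductory heuristic should be retired in favour of that. Second, on the subcritical side, the delicate point is not only matching the $\log\log$ power $\gamma$ but matching the leading power of $\log n$ at all. In the $d=3$ argument, the key term in~\eqref{E_horn} is $|K|p^2$, and the balance $\ell p^2\sim 1$ gives $(\log n)^{-1/2}$. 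For $K_d$ the dominant near-$K_d$ ``horn'' may have up to $d-2$ free companion vertices and hence probability on the order of $|K|^{a}p^{b}$ for some $(a,b)$ depending on how many companion edges can be drawn from $E_0$ (bounded degree caps this) versus $\Eo$; the resulting balance $\ell^{a}p^{b}\sim 1$ need not give exponent $1/(d-1)$ unless $a$ and $b$ come out just right. So before worrying about $\gamma_1=\gamma_2$, one must first check that the horn-counting exponent on the subcritical side even agrees with $1/(d-1)$, which requires a careful classification of which edges of the near-$K_d$ can be short (in $E_0$) and which are forced to be long (in $\Eo$), accounting for the degree bound $\sim 2(d-2)$ of $G_0$. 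Your proposal gestures at this (``once one accounts carefully for which edges are forced to be long'') but that accounting is the real content of a lower-bound proof, and it is genuinely unclear that it closes, which is presumably why the paper leaves this as a conjecture.
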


\begin{figure}[ht!]
\centering
\includegraphics[trim=0cm 0cm 0cm 0cm, clip, width=.3\textwidth]{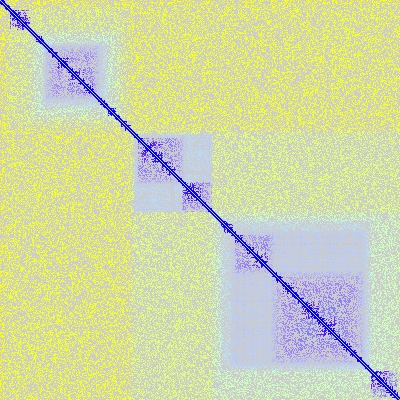}
\hskip0.1cm
\includegraphics[trim=0cm 0cm 0cm 0cm, clip, width=.3\textwidth]{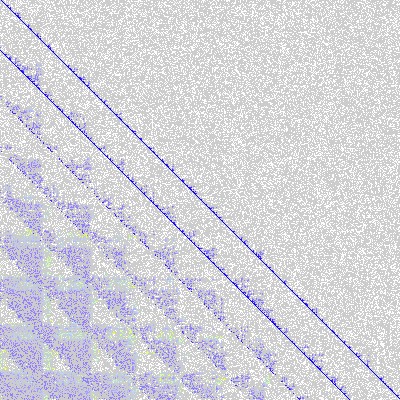}
\hskip0.1cm
\includegraphics[trim=0cm 0cm 0cm 0cm, clip, width=.3\textwidth]{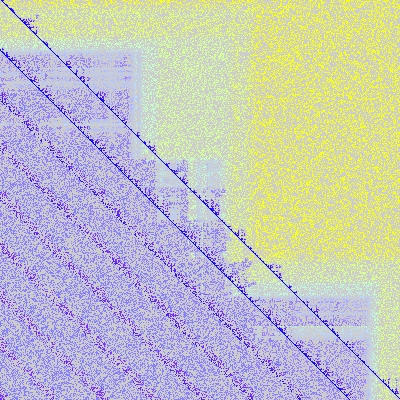}
\caption{Left: nucleation in $K_4$-percolation 
with $\po=0.39$.  Middle and right: 
illustration of Open Problem~\ref{con-oriented}, with $r=50$, and 
respective probabilities  
$\po=0.3$, $\po=0.37$.  In all figures, $n=400$ and the coloring scheme is similar to the one in Fig.~\ref{fig-tra}.}
\label{fig-op}
\end{figure}

Finally, we return to the transitive closure of oriented graphs, with $\Go$ 
the oriented \ER graph with probability $\po$ of edges.  
To understand oriented initial graphs, which are not covered
by Theorem~\ref{supercr-thm-R}, we may for example 
assume 
that 
$G_0$ is the oriented graph on $[n]$ with 
edges $1\er 2\er\cdots \er n$ and 
$1\el (1+r)\el (1+2r)\el \cdots 
\el (n-r)\el n$, where the range $r$ of leftward edges 
may grow with $n$. It is not difficult to see that 
$p_c$ is bounded away from $0$ when $r$ increases linearly 
with $n$, and that, by Theorem~\ref{supercr-thm-R},
 $p_c= (\log n)^{-1/2+o(1)}$ when $r$ is bounded.

\begin{openproblem}
\label{con-oriented} What is the asymptotic 
behavior of $p_c$, when $1\ll r\ll n$?
\end{openproblem}

These dynamics are illustrated in the middle and right panels of Fig.~\ref{fig-op}, which suggest that the most likely scenario
for saturation is through the early occupation of leftward edges 
whose lengths are multiples of $r$.


\makeatletter
\renewcommand\@biblabel[1]{#1.}
\makeatother


\providecommand{\bysame}{\leavevmode\hbox to3em{\hrulefill}\thinspace}
\providecommand{\MR}{\relax\ifhmode\unskip\space\fi MR }
\providecommand{\MRhref}[2]{%
  \href{http://www.ams.org/mathscinet-getitem?mr=#1}{#2}
}
\providecommand{\href}[2]{#2}

\end{document}